\def\BState{\State\hskip-\ALG@thistlm}
\def\downbar#1{
\setbox10=\hbox{$#1$}
            \dimen10=\ht10 \advance\dimen10 by 2.5pt
            \ifdim \dimen10<15pt 
               \advance\dimen10 by -0.5pt
               \dimen11=\dimen10
               \advance\dimen10 by 2.5pt
               \lower \dimen11
            \else \lower \ht10 \fi
            \hbox {\hskip 1.5pt \vrule height \dimen10 depth \dp10}}
\def\upbar#1{
\setbox10=\hbox{$#1$}
            \dimen10=\ht10 \advance\dimen10 by \dp10 \advance\dimen10 by 2.5pt
            \ifdim \dimen10<15pt 
                \advance\dimen10 by 2pt \fi
            \raise 2.5pt \hbox {\hskip -1.5pt \vrule height \dimen10}}
\newtheorem{definition}{\bf Definition}[section]
\newtheorem{theorem}{\bf Theorem}[section]
\newtheorem{proposition}{\bf Proposition}[section]
\newtheorem{corollary}{\bf Corollary}[section]
\newtheorem{remark}{\bf Remark}[section]
\numberwithin{equation}{section}
\begin{document}
\title[Classical orthogonal polynomials]{On First type characterizations of Askey-Wilson polynomials}

\author{D. Mbouna}
\address{D. Mbouna \\Department of Mathematics, Faculty of Sciences, University of Porto, Campo Alegre st., 687, 4169-007 Porto, Portugal}
\email{dieudonne.mbouna@fc.up.pt}

%
\author{A. Suzuki}
\address{A. Suzuki \\University of Coimbra, CMUC, Dep. Mathematics, 3001-501 Coimbra, Portugal}
\email{asuzuki@uc.pt}

\subjclass[2010]{42C05, 33C45}
\date{\today}
\keywords{Askey-Wilson polynomials, first type structure relation, lattice}

\maketitle
In this chapter we characterize Askey-Wilson polynomials including specific and limiting cases of them by some structure relations of the first type.

%
%

\section{Introduction}\label{introduction}
Classical orthogonal polynomial sequences (OPS) with respect to the Askey-Wilson operator are known as Askey-Wilson polynomials including special or limiting cases of them. These orthogonal polynomials have been subject of many interests along the last decade because of their importance and application in other mathematical branches and physical sciences. Among their most useful properties, we mention the three term recurrence relation, their generating functions, their moments, asymptotic properties of their zeros, their {\it derivatives} sequences, the second order linear difference equation and the Rodrigues-type formula. Such OPS are also described by characterization theorems. These are structure relations combining linear and non-linear consecutive terms of the OPS including their derivatives. For more reference about such theorems, we refer the reader to the following books \cite{A1990, KLS2010, L2005}. As a matter of fact, we mention that the Al-Salam polynomials were discovered via characterization theorems (see \cite{AC1976}). In this chapter, we investigate on those of the first type. That is, those involving only the first derivative of the polynomial. For example, in 1972, Al-Salam and Chihara proved (see \cite{Al-Salam-1972}) that $(P_n)_{n\geq 0}$ is a $\mathrm{D}$-classical orthogonal polynomial sequence, namely Hermite, Laguerre, Bessel or Jacobi families, if and only if 
\begin{align}
(az^2+bz+c)\mathrm{D}P_n(z)=(a_nz+b_n)P_n(z)+c_nP_{n-1}(z)\quad (c_n\neq 0)\;, \label{very-classical}
\end{align}
where $\mathrm{D}=d/dz$. Now, replace $\mathrm{D}$ in \eqref{very-classical} by the following Askey-Wilson operator

\begin{align*}
\mathcal{D}_q\,f(x)=\frac{\breve{f}\big(q^{1/2} e^{i\theta}\big)
-\breve{f}\big(q^{-1/2} e^{i\theta}\big)}{\breve{e}\big(q^{1/2}e^{i\theta}\big)-\breve{e}\big(q^{-1/2} e^{i\theta}\big)},
\end{align*}
where, for each polynomial $f$,  $\breve{f}(e^{i\theta})=f(\cos \theta)$ and $e(x)=x$ (see \cite[Section 12.1]{I2005}). The problem of characterizing such OPS was posed by Ismail (see \cite[Conjecture 24.7.8]{I2005}). The case $a=b=0$ and $c=1$ was considered by Al-Salam (see \cite{A-1995}). This problem was addressed in its full generality in \cite{KDPconj}, which leads to a characterization of continuous $q$-Jacobi, Chebyshev of the first kind and some special cases of the Al-Salam-Chihara polynomials. Our motivation here is to obtain a full characterization of Askey-Wilson polynomials similar to \eqref{very-classical}. We consider the averaging operator defined by
\begin{align*}
\mathcal{S}_q f(x)=\frac{\breve{f}\big(q^{1/2} e^{i\theta}\big)
+\breve{f}\big(q^{-1/2} e^{i\theta}\big)}{2}.
\end{align*}
We adopt the following notations. Take $0<q<1$, define $z=x(s)=\cos \theta=(q^s+q^{-s})/2$, with $s\in \mathbb{C}$.
For our purpose, instead of \eqref{very-classical}, we first consider the following difference equation
\begin{align}\label{open-problem}
&(az^2+bz+c)\mathcal{D}_q P_n(z)=a_n\mathcal{S}_q P_{n+1}(z)+b_n\mathcal{S}_q P_n(z)+c_n\mathcal{S}_q P_{n-1}(z)\;,
\end{align}
with $c_n\neq 0$. Our objective is then to characterize all OPS that satisfy \eqref{open-problem}. 
We will also consider the following case
\begin{align*}
&(az^2+bz+c)\mathcal{D}_q \mathcal{S}_q P_n(z)=a_n P_{n+1}(z)+b_n P_n(z)+c_n P_{n-1}(z)\;.
\end{align*}
We recall that the monic Askey-Wilson polynomial, $(Q_n(\cdot; a_1, a_2, a_3, a_4 | q))_{n\geq 0}$, satisfy \eqref{TTRR_relation} (see \cite[(14.1.5)]{KLS2010}) with
\begin{align*}
2B_n &= a_1+\frac{1}{a_1}-\frac{(1-a_1a_2q^n)(1-a_1a_3q^n)(1-a_1a_4q^n)(1-a_1a_2a_3a_4q^{n-1})}{a_1(1-a_1a_2a_3a_4q^{2n-1})(1-a_1a_2a_3a_4q^{2n})}\\[7pt]
&\quad-\frac{a_1(1-q^n)(1-a_2a_3q^{n-1})(1-a_2a_4q^{n-1})(1-a_3a_4q^{n-1})}{(1-a_1a_2a_3a_4q^{2n-1})(1-a_1a_2a_3a_4q^{2n-2})},\\[7pt]
C_{n+1}&=(1-q^{n+1})(1-a_1a_2a_3a_4q^{n-1}) \\[7pt]
&\quad\times \frac{(1-a_1a_2q^n)(1-a_1a_3q^n)(1-a_1a_4q^n)(1-a_2a_3q^n)(1-a_2a_4q^n)(1-a_3a_4q^n)}{4(1-a_1a_2a_3a_4q^{2n-1})(1-a_1a_2a_3a_4q^{2n})^2 (1-a_1a_2a_3a_4q^{2n+1})}
\end{align*}
and subject to the following restrictions (see \cite{KDP2021}):
$$
\begin{array}l
(1-a_1a_2a_3a_4q^n)(1-a_1a_2q^n)(1-a_1a_3q^n) \\[7pt]
\qquad\quad\times(1-a_1a_4q^n)(1-a_2a_3q^n)(1-a_2a_4q^n)(1-a_3a_4q^n) \neq 0.
\end{array}
$$
The Rogers $q$-Hermite polynomials satisfy the TTRR \eqref{TTRR_relation} with 
$$B_n=0\;,\quad C_{n+1}=(1-q^{n+1})/4\quad \quad (n=0,1,\ldots)\;.$$
The structure of the chapter is as follows. Section 2 presents some basic facts of the algebraic theory of OPS together with some useful results. Section \ref{main} contains our main results. In Section \ref{example} we present a finer result for some special cases.  

\section{Preliminaries}
The algebraic theory of orthogonal polynomials was introduced by P. Maroni (see \cite{M1991}). In what follows we recall some basic facts n the theory. Let $\mathcal{P}$ be the vector space of all polynomials with complex coefficients
and let $\mathcal{P}^*$ be its algebraic dual. A simple set in $\mathcal{P}$ is a sequence $(P_n)_{n\geq0}$ such that $\mathrm{deg}(P_n)=n$ for each $n$. 
\begin{definition}
A simple set $(P_n)_{n\geq0}$ is called an OPS with respect to ${\bf u}\in\mathcal{P}^*$ if 
$$
\langle{\bf u},P_nP_m\rangle=\kappa_n\delta_{n,m}\quad(m=0,1,\ldots;\;\kappa_n\in\mathbb{C}\setminus\{0\}),
$$
where $\langle{\bf u},f\rangle$ is the action of ${\bf u}$ on $f\in\mathcal{P}$. In this case, we say that ${\bf u}$ is  regular. 
\end{definition}
\begin{definition}
The left multiplication of a functional ${\bf u}$ by a polynomial $\phi$ is defined by
$$
\left\langle \phi {\bf u}, f  \right\rangle =\left\langle {\bf u},\phi f  \right\rangle \quad (f\in \mathcal{P}).
$$
\end{definition}
\begin{proposition}\label{Propo-an-def}
Let ${\bf u} \in \mathcal{P}^*$ be a functional.  Assume that $\mathcal{P}$ is endowed with an appropriate strict inductive limit topology. Then ${\bf u}$ can be written in the sense of the weak topology in $\mathcal{P}^*$ as 
\begin{align*}
{\bf u} = \sum_{n=0} ^{\infty} \left\langle {\bf u}, P_n \right\rangle {\bf a}_n\;,
\end{align*}
where $({\bf a}_n)_{\geq 0}$ is the dual basis associated to sequence of simple set $(P_n)_{n\geq 0}$. In particular, if $(P_n)_{n\geq0}$ is a monic OPS with respect to ${\bf u}\in\mathcal{P}^*$. Then the corresponding dual basis is explicitly given by 
\begin{align}\label{expression-an}
{\bf a}_n =\left\langle {\bf u} , P_n ^2 \right\rangle ^{-1} P_n{\bf u}.
\end{align}
\end{proposition}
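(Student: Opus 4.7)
The plan is to verify both assertions directly against the defining property of the dual basis, namely $\langle {\bf a}_n, P_m\rangle=\delta_{n,m}$ for all $n,m\geq 0$; since the simple set $(P_m)_{m\geq 0}$ is an algebraic basis of $\mathcal{P}$, this property characterizes each ${\bf a}_n$ uniquely in $\mathcal{P}^*$. Everything else will follow from elementary bookkeeping.

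For the first claim I would fix an arbitrary test polynomial $f\in\mathcal{P}$ of degree $N$ and use the fact that $(P_n)$ being simple guarantees a unique expansion $f=\sum_{k=0}^{N}\alpha_k P_k$. Evaluating the purported identity against $f$ then gives
\[
\sum_{n=0}^{\infty}\langle {\bf u},P_n\rangle\langle {\bf a}_n,f\rangle
=\sum_{n=0}^{\infty}\langle {\bf u},P_n\rangle\sum_{k=0}^{N}\alpha_k\delta_{n,k}
=\sum_{n=0}^{N}\alpha_n\langle {\bf u},P_n\rangle
=\Bigl\langle {\bf u},\sum_{n=0}^{N}\alpha_n P_n\Bigr\rangle
=\langle {\bf u},f\rangle,
\]
so the series truncates to a finite sum on every polynomial, which is exactly the weak convergence asserted in the statement. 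No genuine functional analytic machinery is required beyond this finiteness observation; the appeal to a strict inductive limit topology on $\mathcal{P}$ serves only to give a clean meaning to the weak dual topology on $\mathcal{P}^*$ in which the series is said to converge.

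For the second claim I would specialize to the case where $(P_n)$ is a monic OPS with respect to ${\bf u}$, set $\kappa_n:=\langle {\bf u},P_n^2\rangle$, which is nonzero by regularity, and check that ${\bf b}_n:=\kappa_n^{-1}P_n{\bf u}$ satisfies the defining relation of the dual basis. Using the definition of left multiplication recalled just before the proposition, together with the orthogonality, I compute
\[
\langle {\bf b}_n,P_m\rangle
=\kappa_n^{-1}\langle P_n{\bf u},P_m\rangle
=\kappa_n^{-1}\langle {\bf u},P_nP_m\rangle
=\kappa_n^{-1}\kappa_n\,\delta_{n,m}
=\delta_{n,m}.
\]
By the uniqueness of the dual basis this forces ${\bf a}_n={\bf b}_n$, which is precisely formula \eqref{expression-an}.

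The argument has essentially no obstacle; the only point that might look analytic, the weak summation in the first part, is harmless because evaluation against any polynomial collapses the series to a finite sum. Consequently I expect the main difficulty to be purely presentational, namely making it explicit that "dual basis" is meant in the sense $\langle {\bf a}_n,P_m\rangle=\delta_{n,m}$, so that the uniqueness invoked at the end of part two, and the finite truncation used throughout part one, are both manifestly legitimate.
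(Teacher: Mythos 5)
Your argument is correct. Note that the paper itself gives no proof of this proposition: it is recalled as a standard fact from Maroni's algebraic theory of orthogonal polynomials (the reference \cite{M1991} cited at the start of Section 2), so there is nothing to compare against. Your proof is exactly the standard one: the first part follows because evaluation of the series against any fixed polynomial $f$ collapses to a finite sum via $\langle {\bf a}_n,f\rangle=\alpha_n$, which is precisely what weak convergence means here, and the second part follows by checking that $\kappa_n^{-1}P_n{\bf u}$ satisfies the biorthogonality relation $\langle\cdot,P_m\rangle=\delta_{n,m}$ and invoking the uniqueness of the dual basis (legitimate, since a linear functional is determined by its values on the algebraic basis $(P_m)_{m\ge0}$).
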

The following theorem is a part of a result known in the literature as Favard's Theorem.
\begin{theorem} 
A monic OPS $(P_n)_{n\geq 0}$ is characterized by the following three-term recurrence relation (TTRR):
\begin{align}\label{TTRR_relation}
P_{-1} (z)=0, \quad P_{n+1} (z) =(z-B_n)P_n (z)-C_n P_{n-1} (z) \quad (C_n \neq 0),
\end{align}
and, therefore,
\begin{align}\label{TTRR_coefficients}
B_n = \frac{\left\langle {\bf u} , zP_n ^2 \right\rangle}{\left\langle {\bf u} , P_n ^2 \right\rangle},\quad C_{n+1}  = \frac{\left\langle {\bf u} , P_{n+1} ^2 \right\rangle}{\left\langle {\bf u} , P_n ^2 \right\rangle}.
\end{align}
\end{theorem}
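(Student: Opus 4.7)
The statement is a two-way characterization, so my plan is to prove it as a pair of implications: first, that any monic OPS necessarily satisfies a TTRR of the stated form with the stated coefficients, and second (the Favard direction), that any sequence defined by such a TTRR is the monic OPS of some regular functional. I would treat the coefficient formulas \eqref{TTRR_coefficients} as a by-product of the first direction.

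For the forward direction, the key observation is that $(P_n)_{n\geq0}$ is a basis of $\mathcal{P}$, so $zP_n$, being of degree $n+1$, can be expanded uniquely as
\begin{equation*}
zP_n(z)=\sum_{k=0}^{n+1}\alpha_{n,k}P_k(z),
\end{equation*}
with $\alpha_{n,n+1}=1$ by monicity. Pairing with ${\bf u}$ against $P_j$ and using orthogonality together with the identity $\langle{\bf u},zP_nP_j\rangle=\langle{\bf u},P_n(zP_j)\rangle$, I would conclude that $\alpha_{n,j}=0$ whenever $j\leq n-2$, since then $zP_j$ has degree at most $n-1$. This collapses the expansion to three terms and yields
\begin{equation*}
zP_n=P_{n+1}+B_nP_n+C_nP_{n-1},
\end{equation*}
with $B_n=\langle{\bf u},zP_n^2\rangle/\langle{\bf u},P_n^2\rangle$. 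To recover the product formula for $C_n$, I would compute $\langle{\bf u},zP_nP_{n-1}\rangle$ in two ways, using the TTRR once more on $zP_{n-1}$ to get $\langle{\bf u},zP_nP_{n-1}\rangle=\langle{\bf u},P_n^2\rangle$; division by $\langle{\bf u},P_{n-1}^2\rangle$ produces \eqref{TTRR_coefficients}. Regularity of ${\bf u}$ guarantees $C_n\neq 0$.

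For the converse, I would define ${\bf u}\in\mathcal{P}^*$ by prescribing its values on the basis $(P_n)$: set $\langle{\bf u},P_0\rangle=1$ and $\langle{\bf u},P_n\rangle=0$ for $n\geq1$. This is exactly the expansion ${\bf u}=\mathbf{a}_0$ in the notation of Proposition \ref{Propo-an-def}. I would then prove by induction on $n$ that $\langle{\bf u},P_nP_m\rangle=0$ for $0\leq m<n$ and $\langle{\bf u},P_n^2\rangle=C_1C_2\cdots C_n$, which is nonzero by hypothesis, hence ${\bf u}$ is regular. The induction step reduces $P_nP_m$ through the TTRR: writing $P_n=(z-B_{n-1})P_{n-1}-C_{n-1}P_{n-2}$, the product $P_nP_m$ expands into lower-degree combinations where the inductive hypothesis applies directly, and the only surviving term in $\langle{\bf u},P_n^2\rangle$ is $C_n\langle{\bf u},P_{n-1}^2\rangle$, giving the recurrence for the norms.

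The main obstacle, in my view, is not any individual calculation but making sure the converse is set up cleanly: one must avoid circularity, since ${\bf u}$ is being built precisely to make the $(P_n)$ orthogonal. Defining ${\bf u}$ through its action on the basis $(P_n)$ rather than on the monomials sidesteps this, because the TTRR then feeds the induction without presupposing any orthogonality. Once the bookkeeping of the induction is organized this way, the coefficient identities \eqref{TTRR_coefficients} follow immediately from the computation carried out in the forward direction applied to this constructed ${\bf u}$.
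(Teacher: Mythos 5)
The paper does not prove this statement: it is quoted as part of Favard's theorem, a classical result, so there is no in-paper argument to compare against. Judged on its own, your forward direction is correct and complete: expanding $zP_n$ in the basis $(P_k)_{k\geq 0}$, killing the coefficients with $j\leq n-2$ via $\langle{\bf u},(zP_n)P_j\rangle=\langle{\bf u},P_n(zP_j)\rangle$, and reading off $B_n$ and $C_n$ by pairing against $P_n$ and $P_{n-1}$ is exactly the standard derivation of \eqref{TTRR_relation}--\eqref{TTRR_coefficients}, and regularity gives $C_n\neq0$.

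The gap is in the converse, at the precise point you wave at as ``bookkeeping.'' With ${\bf u}$ defined by $\langle{\bf u},P_0\rangle=1$, $\langle{\bf u},P_n\rangle=0$ $(n\geq1)$, the induction on $n$ using $P_n=(z-B_{n-1})P_{n-1}-C_{n-1}P_{n-2}$ does \emph{not} ``apply directly'' in the critical case $m=n-1$: writing
\begin{equation*}
\langle{\bf u},P_nP_{n-1}\rangle=\langle{\bf u},(zP_{n-1})P_{n-1}\rangle-B_{n-1}\langle{\bf u},P_{n-1}^2\rangle-C_{n-1}\langle{\bf u},P_{n-2}P_{n-1}\rangle
\end{equation*}
and then substituting $zP_{n-1}=P_n+B_{n-1}P_{n-1}+C_{n-1}P_{n-2}$ returns the tautology $\langle{\bf u},P_nP_{n-1}\rangle=\langle{\bf u},P_nP_{n-1}\rangle$; the same circularity appears when you try to extract $\langle{\bf u},P_n^2\rangle=C_n\langle{\bf u},P_{n-1}^2\rangle$ before orthogonality of consecutive terms is known. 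The inductive hypothesis must be strengthened: prove instead that $\langle{\bf u},z^mP_n\rangle=0$ for all $n>m$, by induction on the power $m$, using $zP_n=P_{n+1}+B_nP_n+C_nP_{n-1}$ to trade one factor of $z$ for a spread in the index; the base case $m=0$ is the definition of ${\bf u}$, and every term produced has index exceeding $m-1$. Then $\langle{\bf u},z^nP_n\rangle=C_n\langle{\bf u},z^{n-1}P_{n-1}\rangle=C_1\cdots C_n\neq0$, and since $P_n^2-z^nP_n=(P_n-z^n)P_n$ with $\deg(P_n-z^n)<n$, this equals $\langle{\bf u},P_n^2\rangle$, giving regularity and the stated norms. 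With that reorganization your argument is the standard, correct proof.
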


Along this work, we will adopt the following definition (see \cite{FK-NM2011}).
\begin{definition}
The Askey-Wilson and the averaging operators  induce two elements on $\mathcal{P}^*$, say $\mathbf{D}_q$ and $\mathbf{S}_q$, via the following definition : 
\begin{align*}
\langle \mathbf{D}_q{\bf u},f\rangle=-\langle {\bf u},\mathcal{D}_q f\rangle,\quad \langle\mathbf{S}_q{\bf u},f\rangle=\langle {\bf u},\mathcal{S}_q f\rangle.
\end{align*}
\end{definition}
\begin{proposition}\cite{KDP2021, FK-NM2011, MFN2017} 
Let $f,g\in\mathcal{P}$ and ${\bf u}\in\mathcal{P}^*$. Then the following properties hold:
\begin{align}
\mathcal{D}_q \big(fg\big)&= \big(\mathcal{D}_q f\big)\big(\mathcal{S}_q g\big)+\big(\mathcal{S}_q f\big)\big(\mathcal{D}_q g\big), \label{def-Dx-fg} \\[7pt]
\mathcal{S}_q \big( fg\big)&=\big(\mathcal{D}_q f\big) \big(\mathcal{D}_q g\big)\texttt{U}_2  +\big(\mathcal{S}_q f\big) \big(\mathcal{S}_q g\big), \label{def-Sx-fg} \\[7pt]
\alpha \mathcal{S}_q ^2 f&=\mathcal{S}_q \big(\texttt{U}_1 \mathcal{D}_qf\big) +\texttt{U}_2\mathcal{D}_q ^2 f +\alpha f ,\label{def-Sx^2f}\\[7pt]
f\mathcal{D}_qg&=\mathcal{D}_q\left[ \Big(\mathcal{S}_qf-\frac{\texttt{U}_1}{\alpha}\mathcal{D}_qf \Big)g\right]-\frac{1}{\alpha}\mathcal{S}_q \Big(g\mathcal{D}_q f\Big) , \label{def-fDxg} \\[7pt]
\mathcal{D}_q ^n\mathcal{S}_qf &=\alpha_n \mathcal{S}_q\mathcal{D}_q ^n f+\gamma_n \texttt{U}_1\mathcal{D}_q ^{n+1}f   ,\label{def-DxnSxf}\\[7pt]
f{\bf D}_q {\bf u}&={\bf D}_q\left(\mathcal{S}_qf~{\bf u}  \right)-{\bf S}_q\left(\mathcal{D}_qf~{\bf u}  \right), \label{def-fD_x-u}\\[7pt]
\alpha \mathbf{D}_q ^n \mathbf{S}_q {\bf u}&= \alpha_{n+1} \mathbf{S}_q \mathbf{D}_q^n {\bf u}
+\gamma_n \texttt{U}_1\mathbf{D}_q^{n+1}{\bf u}, \label{DxnSx-u} 
\end{align}
with $n=0,1,\ldots$, where $\alpha =(q^{1/2}+q^{-1/2})/2$, $\texttt{U}_1 (z)=(\alpha ^2 -1)z$ and $\texttt{U}_2 (z)=(\alpha ^2 -1)(z^2-1)$. 
\end{proposition}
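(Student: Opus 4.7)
The plan is to establish each identity by working first at the level of polynomials and then transferring to the functional side by duality, since the definitions $\langle \mathbf{D}_q{\bf u},f\rangle=-\langle {\bf u},\mathcal{D}_q f\rangle$ and $\langle \mathbf{S}_q{\bf u},f\rangle=\langle {\bf u},\mathcal{S}_q f\rangle$ immediately translate any polynomial identity into a dual one. The two product rules \eqref{def-Dx-fg} and \eqref{def-Sx-fg} are the base from which all the others are derived, so I would verify them first by a direct calculation: writing $F(\theta)=\breve{f}(e^{i\theta})$ and $G(\theta)=\breve{g}(e^{i\theta})$, both sides of \eqref{def-Dx-fg} and \eqref{def-Sx-fg} become polynomial identities in the two sampled values $F(\theta\pm\tfrac{i}{2}\log q)$ and $G(\theta\pm\tfrac{i}{2}\log q)$. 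The factor $\texttt{U}_2$ appears in \eqref{def-Sx-fg} because the two cross terms left over after expanding the product of averages combine via $(\breve{e}(q^{1/2}e^{i\theta})-\breve{e}(q^{-1/2}e^{i\theta}))^2=4\texttt{U}_2(\cos\theta)$.

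With \eqref{def-Dx-fg}–\eqref{def-Sx-fg} in hand, identity \eqref{def-Sx^2f} comes out of applying $\mathcal{S}_q$ to $\mathcal{S}_q f$ and re-expressing the result using the characteristic quadratic relation $\mathcal{S}_q^2=\mathrm{id}+\texttt{U}_2\mathcal{D}_q^2/\alpha$ plus a $\mathcal{S}_q(\texttt{U}_1\mathcal{D}_q(\cdot))$ correction; this is essentially the Askey--Wilson analogue of the telescoping that turns a second iterate of the averaging operator into the identity plus a curvature term. Identity \eqref{def-fDxg} is then obtained by solving \eqref{def-Dx-fg} for $(\mathcal{S}_qf)(\mathcal{D}_qg)$ and substituting $\mathcal{S}_qf=\mathcal{S}_qf-\frac{\texttt{U}_1}{\alpha}\mathcal{D}_qf+\frac{\texttt{U}_1}{\alpha}\mathcal{D}_qf$, then using \eqref{def-Sx^2f} applied to a cleverly chosen product to absorb the unwanted $\mathcal{D}_q^2$ term; the key algebraic input is again the quadratic relation between $\mathcal{S}_q$ and $\texttt{U}_2\mathcal{D}_q^2$.

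The identity \eqref{def-DxnSxf} is the only one that needs an inductive argument. For $n=0$ it reduces to $\mathcal{S}_q=\alpha_0\mathcal{S}_q+\gamma_0\texttt{U}_1\mathcal{D}_q$, forcing $\alpha_0=1$, $\gamma_0=0$; the inductive step requires a ``commutation'' formula for $\mathcal{D}_q\mathcal{S}_q$ and $\mathcal{S}_q\mathcal{D}_q$, which itself follows from \eqref{def-Sx-fg} applied with $g=\mathrm{id}$ together with $\mathcal{D}_q\texttt{U}_1$ and $\mathcal{S}_q\texttt{U}_1$ being explicit low-degree polynomials. This lets one extract a linear recurrence $\alpha_{n+1}=\alpha\alpha_n+(\alpha^2-1)\gamma_n$, $\gamma_{n+1}=\alpha_n+\alpha\gamma_n$ (or similar) whose closed form gives the coefficients. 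I expect this step to be the main obstacle, because keeping track of how each application of $\mathcal{D}_q$ shifts both the $\mathcal{S}_q$-factor and the $\texttt{U}_1$-multiplier is delicate, and the constants $\alpha_n,\gamma_n$ have to be tracked carefully.

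Finally, \eqref{def-fD_x-u} and \eqref{DxnSx-u} are obtained by dualizing \eqref{def-fDxg} and \eqref{def-DxnSxf}. Concretely, for any test polynomial $g$ one computes $\langle f\mathbf{D}_q{\bf u},g\rangle=\langle\mathbf{D}_q{\bf u},fg\rangle=-\langle{\bf u},\mathcal{D}_q(fg)\rangle$ and expands $\mathcal{D}_q(fg)$ by \eqref{def-Dx-fg}; reorganizing via the definitions of $\mathbf{D}_q$ and $\mathbf{S}_q$ on functionals yields \eqref{def-fD_x-u}. The same transfer, applied to \eqref{def-DxnSxf} iterated $n$ times, gives \eqref{DxnSx-u}, with the shift $\alpha_n\mapsto \alpha_{n+1}$ explained by the sign flip introduced by the duality $\langle\mathbf{D}_q\cdot,\cdot\rangle=-\langle\cdot,\mathcal{D}_q\cdot\rangle$ at each application.
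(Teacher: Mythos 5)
The paper does not actually prove this proposition --- it is imported wholesale from \cite{KDP2021, FK-NM2011, MFN2017} --- so there is no internal proof to compare against. Your overall strategy is the one used in those references and is sound where it is concrete: verifying \eqref{def-Dx-fg} and \eqref{def-Sx-fg} on the sampled values $\breve f(q^{\pm1/2}e^{i\theta})$, with $\big(\breve e(q^{1/2}e^{i\theta})-\breve e(q^{-1/2}e^{i\theta})\big)^2=4\texttt{U}_2(\cos\theta)$ accounting for the $\texttt{U}_2$ term, is exactly right; the induction for \eqref{def-DxnSxf} with the recurrences $\alpha_{n+1}=\alpha\alpha_n+(\alpha^2-1)\gamma_n$ and $\gamma_{n+1}=\alpha_n+\alpha\gamma_n$ is correct (both are easily checked from the closed forms of $\alpha_n,\gamma_n$); and your explicit dualization computation for \eqref{def-fD_x-u} is correct --- note that it dualizes \eqref{def-Dx-fg}, as your displayed calculation shows, not \eqref{def-fDxg} as you first assert.

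Three spots are not yet proofs. First, your justification of \eqref{def-Sx^2f} is circular: the ``characteristic quadratic relation plus a $\mathcal{S}_q(\texttt{U}_1\mathcal{D}_q(\cdot))$ correction'' that you invoke \emph{is} identity \eqref{def-Sx^2f}; an independent verification is needed, e.g.\ on the sampled values (observe that $\mathcal{S}_q^2f$ involves $\breve f$ at $qe^{i\theta}$, $e^{i\theta}$ and $q^{-1}e^{i\theta}$) or on a basis of $\mathcal{P}$. Second, ``using \eqref{def-Sx^2f} applied to a cleverly chosen product'' for \eqref{def-fDxg} names no product; the standard route is to expand $\mathcal{D}_q\big[\big(\mathcal{S}_qf-\frac{\texttt{U}_1}{\alpha}\mathcal{D}_qf\big)g\big]$ and $\mathcal{S}_q\big(g\mathcal{D}_qf\big)$ via \eqref{def-Dx-fg}--\eqref{def-Sx-fg} and then cancel the unwanted terms using \eqref{def-Sx^2f} and \eqref{def-DxnSxf} with $n=1$. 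Third, \eqref{DxnSx-u} does \emph{not} follow from \eqref{def-DxnSxf} by ``the same transfer'': the shift $\alpha_n\mapsto\alpha_{n+1}$ and the extra factor $\alpha$ on the left are not produced by the sign in $\langle\mathbf{D}_q{\bf u},f\rangle=-\langle{\bf u},\mathcal{D}_qf\rangle$. One needs a separate induction at the functional level --- the case $n=0$ is trivial because $\alpha_1=\alpha$ and $\gamma_0=0$ --- whose inductive step uses \eqref{def-fD_x-u} together with a Leibniz-type formula for $\mathbf{D}_q(\texttt{U}_1\mathbf{v})$. As a sketch your proposal identifies the right tools, but these three steps would have to be filled in before it constitutes a proof.
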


\begin{proposition}
The following relations hold. 
\begin{align}
\mathcal{D}_q z^n =\gamma_n z^{n-1}+u_nz^{n-3}+\cdots,\quad \mathcal{S}_q z^n =\alpha_n z^n+\widehat{u}_nz^{n-2}+\cdots, \label{Dx-xnSx-xn}
\end{align}
with $n=0,1,\ldots$, where $$\alpha_n= \mbox{$\frac12$}(q^{n/2} +q^{-n/2})\;,\quad \gamma_n = \frac{q^{n/2}-q^{-n/2}}{q^{1/2}-q^{-1/2}}$$ and, $u_n$ and $\widehat{u}_n$ are some complex numbers. 
\end{proposition}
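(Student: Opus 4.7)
The plan is to derive both formulas simultaneously via a single binomial expansion performed after the trigonometric substitution $z = (y+y^{-1})/2$ with $y = e^{i\theta}$. First, I would set
\[
z_\pm := \frac{q^{\pm 1/2}y + q^{\mp 1/2}y^{-1}}{2},
\]
so that the definitions of $\mathcal{D}_q$ and $\mathcal{S}_q$ rewrite at the monomial $z^n$ directly as
\[
\mathcal{D}_q z^n = \frac{z_+^n - z_-^n}{z_+ - z_-}, \qquad \mathcal{S}_q z^n = \frac{z_+^n + z_-^n}{2}.
\]

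Next I would introduce $\mu := (q^{1/2}-q^{-1/2})/2$, which satisfies the key identities $\alpha \pm \mu = q^{\pm 1/2}$ and $\alpha^2-\mu^2=1$. A direct computation gives $z_+ + z_- = 2\alpha z$ and $(z_+-z_-)^2 = \mu^2(y-y^{-1})^2 = 4\mu^2(z^2-1)$, so that $z_\pm = \alpha z \pm \beta$ with $\beta^2 = \mu^2(z^2-1)$. Expanding by the binomial theorem then yields
\begin{align*}
\mathcal{S}_q z^n &= \sum_{j=0}^{\lfloor n/2\rfloor}\binom{n}{2j}\alpha^{n-2j}\mu^{2j}\,z^{n-2j}(z^2-1)^j,\\
\mathcal{D}_q z^n &= \sum_{j=0}^{\lfloor (n-1)/2\rfloor}\binom{n}{2j+1}\alpha^{n-2j-1}\mu^{2j}\,z^{n-2j-1}(z^2-1)^j.
\end{align*}
Both right-hand sides are manifestly polynomials in $z$, of degrees $\le n$ and $\le n-1$ respectively, and every non-zero monomial appearing has degree of the same parity as $n$ (respectively $n-1$). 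This already furnishes the claimed sparse shape $\mathcal{S}_q z^n = \alpha_n z^n + \widehat{u}_n z^{n-2}+\cdots$ and $\mathcal{D}_q z^n = \gamma_n z^{n-1} + u_n z^{n-3}+\cdots$.

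It remains to read off the leading coefficients. Taking the coefficient of $z^n$ in the first display I obtain
\[
\sum_{j}\binom{n}{2j}\alpha^{n-2j}\mu^{2j} = \frac{(\alpha+\mu)^n + (\alpha-\mu)^n}{2} = \frac{q^{n/2}+q^{-n/2}}{2} = \alpha_n,
\]
and the analogous odd-indexed sum for $\mathcal{D}_q z^n$ gives the coefficient of $z^{n-1}$ as $\bigl((\alpha+\mu)^n - (\alpha-\mu)^n\bigr)/(2\mu) = \gamma_n$. I do not expect any real obstacle: the whole argument is one binomial expansion, and the crucial simplification is the observation $\alpha \pm \mu = q^{\pm 1/2}$, which converts the two scalar sums into the closed-form expressions defining $\alpha_n$ and $\gamma_n$. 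A purely inductive alternative using the Leibniz rules \eqref{def-Dx-fg}--\eqref{def-Sx-fg} is equally direct, but then one must separately verify the scalar recurrences $\alpha_n + \alpha\gamma_n = \gamma_{n+1}$ and $\mu^2\gamma_n + \alpha\alpha_n = \alpha_{n+1}$.
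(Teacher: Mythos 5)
Your argument is correct: the identification of $\mathcal{D}_q z^n$ and $\mathcal{S}_q z^n$ with $(z_+^n-z_-^n)/(z_+-z_-)$ and $(z_+^n+z_-^n)/2$ follows directly from the definitions with $\breve{f}(y)=f((y+y^{-1})/2)$, the computations $z_++z_-=2\alpha z$ and $(z_+-z_-)^2=4\mu^2(z^2-1)$ are right, the binomial expansions produce only powers of $\beta^2=\mu^2(z^2-1)$ (so genuine polynomials in $z$ with the stated parity gaps), and the leading coefficients collapse via $\alpha\pm\mu=q^{\pm1/2}$ to exactly $\alpha_n$ and $\gamma_n$. This is a genuinely different route from the paper, whose entire proof is the single sentence ``By mathematical induction on $n$'' --- i.e.\ the inductive alternative you sketch in your last lines, which rests on the Leibniz rules \eqref{def-Dx-fg}--\eqref{def-Sx-fg} applied to $z\cdot z^n$ and the scalar recurrences $\alpha_n+\alpha\gamma_n=\gamma_{n+1}$ and $(\alpha^2-1)\gamma_n+\alpha\alpha_n=\alpha_{n+1}$ (note $\alpha^2-1=\mu^2$, consistent with your $\alpha^2-\mu^2=1$). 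Your closed-form expansion buys more: it gives explicit formulas for \emph{all} the coefficients, not just the leading ones (e.g.\ $\widehat{u}_n$ and $u_n$ can be read off from the $j$-sums), and it makes the parity structure of $\mathcal{D}_q z^n$ and $\mathcal{S}_q z^n$ transparent rather than something carried along through an induction. The induction is shorter to state but proves only the displayed leading-order shape. Either proof is acceptable here; yours is the more informative one.
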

\begin{proof}
By mathematical induction on $n\in \mathbb{N}$.
\end{proof}

Along this chapter we set $\gamma_{-1}:=-1$ and $\alpha_{-1}:=\alpha$.
We denote by $P_n ^{[k]}$ $(k=0,1,\ldots)$ the monic polynomial of degree $n$ defined by
\begin{align*}
P_n ^{[k]} (z)=\frac{\mathcal{D}_q ^k P_{n+k} (z)}{ \prod_{j=1} ^k \gamma_{n+j}} =\frac{\gamma_{n} !}{\gamma_{n+k} !} \mathcal{D}_q ^k P_{n+k} (z). 
\end{align*}
Here it is understood that $\mathrm{D}_x ^0 f=f $, empty product equals one, and $\gamma_0 !=1$, $\gamma_{n+1}!=\gamma_1\cdots \gamma_n \gamma_{n+1}$. 

\begin{proposition}
Let $({\bf a}^{[k]} _n)_{n\geq 0}$ be the dual basis associated to the sequence $(P_n ^{[k]})_{n\geq 0}$. Then
\begin{align}
{\bf D}_q ^k {\bf a}^{[k]} _n=(-1)^k \frac{\gamma_{n+k}!}{\gamma_n ! }{\bf a}_{n+k}\quad (k=0,1,\ldots). \label{basis-Dx-derivatives}
\end{align}
\end{proposition}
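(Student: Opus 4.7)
The plan is to test both sides of \eqref{basis-Dx-derivatives} against the simple set $(P_m)_{m\geq 0}$ and then invoke the expansion from Proposition~\ref{Propo-an-def}. Since any $\mathbf{v}\in\mathcal{P}^*$ is determined by its moments $\langle \mathbf{v},P_m\rangle$ (via $\mathbf{v}=\sum_m \langle \mathbf{v},P_m\rangle\,{\bf a}_m$), it suffices to check that $\langle \mathbf{D}_q^k {\bf a}^{[k]}_n,P_m\rangle$ agrees with $(-1)^k(\gamma_{n+k}!/\gamma_n!)\,\langle {\bf a}_{n+k},P_m\rangle=(-1)^k(\gamma_{n+k}!/\gamma_n!)\,\delta_{n+k,m}$ for every $m\geq 0$.

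First I would iterate the definition $\langle \mathbf{D}_q \mathbf{u},f\rangle=-\langle \mathbf{u},\mathcal{D}_qf\rangle$ to obtain
\begin{align*}
\langle \mathbf{D}_q^k {\bf a}^{[k]}_n,P_m\rangle =(-1)^k\langle {\bf a}^{[k]}_n,\mathcal{D}_q^k P_m\rangle.
\end{align*}
Next I would use the definition of $P_j^{[k]}$ in reverse: since $P_n^{[k]}=(\gamma_n!/\gamma_{n+k}!)\,\mathcal{D}_q^k P_{n+k}$, we have $\mathcal{D}_q^k P_m=0$ whenever $m<k$, and
\begin{align*}
\mathcal{D}_q^k P_m=\frac{\gamma_m!}{\gamma_{m-k}!}\,P_{m-k}^{[k]}\qquad(m\geq k).
\end{align*}
Combining this with the biorthogonality $\langle {\bf a}^{[k]}_n,P_j^{[k]}\rangle=\delta_{n,j}$ produces
\begin{align*}
\langle \mathbf{D}_q^k {\bf a}^{[k]}_n,P_m\rangle = (-1)^k \frac{\gamma_m!}{\gamma_{m-k}!}\,\delta_{n,m-k},
\end{align*}
which vanishes unless $m=n+k$ and in that case equals $(-1)^k \gamma_{n+k}!/\gamma_n!$. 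This matches $(-1)^k(\gamma_{n+k}!/\gamma_n!)\,\langle {\bf a}_{n+k},P_m\rangle$, so \eqref{basis-Dx-derivatives} follows from Proposition~\ref{Propo-an-def}.

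There is essentially no conceptual obstacle here; the only thing one must be careful about is handling the degenerate case $m<k$ separately (so that the factorial quotients remain well-defined) and remembering the convention $\mathcal{D}_q^0=\mathrm{Id}$ and $\gamma_0!=1$ when $k=0$. The argument then goes through verbatim for every $k\geq 0$, with the induction on $k$ already built into the iteration of $\mathbf{D}_q$.
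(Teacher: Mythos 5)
Your proof is correct and is exactly the computation the paper intends when it says the result follows ``directly from Proposition~\ref{Propo-an-def}'': expand $\mathbf{D}_q^k{\bf a}^{[k]}_n$ in the dual basis $({\bf a}_m)_{m\geq 0}$ and evaluate the coefficients $\langle \mathbf{D}_q^k{\bf a}^{[k]}_n,P_m\rangle$ via the duality $\langle\mathbf{D}_q\mathbf{u},f\rangle=-\langle\mathbf{u},\mathcal{D}_qf\rangle$ and the identity $\mathcal{D}_q^kP_m=(\gamma_m!/\gamma_{m-k}!)P^{[k]}_{m-k}$. Your handling of the degenerate case $m<k$ and the conventions for $k=0$ is also correct, so nothing is missing.
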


\begin{proof}
This result is obtained directly from Proposition \ref{Propo-an-def}. 
\end{proof}

We recall some useful theorems.
\begin{theorem}\cite[Theorem 20.1.3]{I2005}\label{T}\\
A second order operator equation of the form
\begin{align}\label{Ismail}
\phi(z)\mathcal{D}^2_q\, Y+\psi(z) \mathcal{S}_q \mathcal{D}_q \, Y+h(z)\, Y=\lambda_n\, Y
\end{align}
has a polynomial solution $Y_n(z)$ of exact degree $n$ for each $n=0,1,\dots$, if and only if $Y_n(z)$ is a multiple of the Askey-Wilson polynomials, or special or limiting cases of them. In all these cases $\phi$, $\psi$, $h$, and $\lambda_n$ reduce to
\begin{align*}
\phi(z)&=-q^{-1/2}(2(1+\sigma_4)z^2-(\sigma_1+\sigma_3)z-1+\sigma_2-\sigma_4),\\[7pt]
\psi(z)&=\frac{2}{1-q} (2(\sigma_4-1)z+\sigma_1-\sigma_3), \quad h(z)=0,\\[7pt]
\lambda_n&=\frac{4 q(1-q^{-n})(1-\sigma_4 q^{n-1})}{(1-q)^2}, 
\end{align*}
or a special or limiting case of it, $\sigma_j$ being the jth elementary symmetric function of the Askey-Wilson parameters. 
\end{theorem}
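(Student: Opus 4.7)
The plan is to prove both directions of the equivalence, with the forward direction (necessity) being the standard verification that Askey--Wilson polynomials do satisfy such an eigenvalue equation, and the reverse direction (sufficiency) being where the real work lies. For necessity, I would invoke the Rodrigues-type representation of Askey--Wilson polynomials and apply $\mathcal{D}_q$ twice, using the Leibniz-type identities \eqref{def-Dx-fg}--\eqref{def-Sx-fg}, to verify that these polynomials are eigenfunctions of the stated operator with the stated $\lambda_n$.

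For sufficiency, I would begin with a degree analysis. From \eqref{Dx-xnSx-xn} we see that $\mathcal{D}_q^2 Y_n$ has degree $n-2$, $\mathcal{S}_q\mathcal{D}_q Y_n$ has degree $n-1$, and $Y_n$ has degree $n$. Demanding that the left-hand side of \eqref{Ismail} reduce to $\lambda_n Y_n$, a polynomial of exact degree $n$ for every $n\geq 0$, forces $\deg\phi\leq 2$, $\deg\psi\leq 1$, and $\deg h\leq 0$. Writing $\phi(z)=\phi_2 z^2+\phi_1 z+\phi_0$, $\psi(z)=\psi_1 z+\psi_0$ and $h(z)=h_0$, I would apply both sides to $z^n$ and equate the leading coefficient, obtaining
\begin{align*}
\lambda_n=\phi_2\,\gamma_n\gamma_{n-1}+\psi_1\,\alpha_{n-1}\gamma_n+h_0.
\end{align*}
The regularity requirement $\lambda_n\neq \lambda_m$ for $n\neq m$, together with $\lambda_0=h_0$, would let me absorb $h_0$ into an additive normalization and conclude $h\equiv 0$, while also ruling out degenerate parameter regimes.

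The next step is to produce a three-term recurrence \eqref{TTRR_relation} for $Y_n$. Here I would expand $\phi(z)\mathcal{D}_q^2 Y_n+\psi(z)\mathcal{S}_q\mathcal{D}_q Y_n$ using \eqref{def-Dx-fg}, \eqref{def-Sx-fg} and \eqref{def-DxnSxf}, then project back onto the basis $(Y_k)_{k\geq 0}$ via the functional duality \eqref{expression-an}, \eqref{basis-Dx-derivatives}. The self-adjointness of the left-hand operator with respect to the orthogonality functional (derivable from \eqref{def-fD_x-u} and \eqref{DxnSx-u} applied to a suitable weight functional ${\bf u}$ satisfying a Pearson-type equation $\mathbf{D}_q(\phi_\ast {\bf u})=\mathbf{S}_q(\psi_\ast {\bf u})$) reduces the expansion to a banded form and yields explicit expressions for $B_n$ and $C_{n+1}$ in terms of $\phi_i,\psi_j$.

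The main obstacle is the final identification step: showing that the resulting recurrence coefficients $(B_n,C_{n+1})$ can be reparameterized in terms of four parameters $a_1,a_2,a_3,a_4$ so as to match the Askey--Wilson recurrence coefficients quoted in the introduction, with the $\phi_i,\psi_j$ corresponding to the elementary symmetric functions $\sigma_j$. Concretely, one must solve a nonlinear system expressing $(\phi_2,\phi_1,\phi_0,\psi_1,\psi_0)$ in terms of $(\sigma_1,\sigma_2,\sigma_3,\sigma_4)$, and verify that every admissible parameter choice lands inside the Askey--Wilson family (or a limiting/degenerate case thereof when some leading coefficient vanishes). I would handle the degenerate cases separately by tracking which of $\phi_2$, $\psi_1$, or the discriminant of $\phi$ is zero, producing the continuous $q$-Hahn, big/little $q$-Jacobi and Rogers $q$-Hermite limits as appropriate.
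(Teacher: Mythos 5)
You should first be aware that the paper contains no proof of this statement: Theorem \ref{T} is quoted verbatim from Ismail's book \cite[Theorem 20.1.3]{I2005} and is used as a black box (it is the engine behind Theorem \ref{propo-sol-q-quadratic}), so there is no in-paper argument to measure yours against. Judged on its own terms, your outline follows the standard Bochner-type route, and its last step could be shortened considerably: once a distributional Pearson equation ${\bf D}_q(\phi{\bf u})={\bf S}_q(\psi{\bf u})$ is in hand, the explicit recurrence coefficients \eqref{Bn-Cn-Dx} of Theorem \ref{main-Thm1} already carry out the matching with the Askey--Wilson data, so you would not need to re-solve the nonlinear system in $(\sigma_1,\dots,\sigma_4)$ by hand.

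The genuine gap sits at the start of your sufficiency argument: the hypothesis is only that \eqref{Ismail} admits a polynomial solution of exact degree $n$ for every $n$; it is \emph{not} assumed that $(Y_n)_{n\geq0}$ is an OPS, and the existence of ``the orthogonality functional'' you invoke to get a banded expansion is precisely the hard content of the theorem, not something you may presuppose. To close this you must (i) construct a candidate functional ${\bf u}$, e.g.\ as the element ${\bf a}_0$ of the dual basis of the simple set $(Y_n)_{n\geq 0}$, or as a solution of the Pearson equation attached to the formal adjoint of $\phi\mathcal{D}_q^2+\psi\mathcal{S}_q\mathcal{D}_q$ obtained from \eqref{def-fD_x-u} and \eqref{DxnSx-u} (note the adjoint has \emph{different} coefficient polynomials from $\phi,\psi$); (ii) prove the operator is symmetric with respect to ${\bf u}$, so that $(\lambda_n-\lambda_m)\langle{\bf u},Y_nY_m\rangle=0$; and (iii) show the $\lambda_n$ are pairwise distinct and that $\langle{\bf u},Y_n^2\rangle\neq0$, i.e.\ that ${\bf u}$ is regular, treating separately the degenerate parameter configurations where this fails --- which is exactly where the ``special or limiting cases'' enter. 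A smaller but real issue is the degree count: $\deg\phi\leq2$ and $\deg\psi\leq1$ do not follow from each term separately having degree at most $n$, since the top coefficients of $\phi\mathcal{D}_q^2Y_n$ and $\psi\mathcal{S}_q\mathcal{D}_qY_n$ could cancel; you need to use that the same $\phi,\psi$ serve for all $n$ and that $\gamma_n\gamma_{n-1}$ and $\alpha_{n-1}\gamma_n$ are not proportional as functions of $n$ to force the excess coefficients to vanish.
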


The following result is presented in \cite{MKKJ2019} as a partial answer to a conjecture posed by M. E. H. Ismail in \cite{I2005}. We also refer the reader to \cite{KCDM2023} for a short proof of this result.  
\begin{theorem}\label{Kenfack-Kerstin-result}
The only OPS, $(P_n)_{n\geq 0}$, for which 
\begin{align}\label{Ismail-conjecture}
\pi(z)\mathcal{D}_q ^2 P_n(z)=\sum_{j=n-2} ^{n+2} a_{n,j}P_j(z)\;,\quad\quad a_{n,n-2}\neq 0\;,\quad n=0,1,\ldots\;.
\end{align}
where $\pi$ is a polynomial of degree at most four, are the Askey-Wilson polynomials including special or limiting cases of them.
\end{theorem}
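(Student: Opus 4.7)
My plan is to derive Ismail's second-order $q$-difference equation \eqref{Ismail} from the five-term structure relation \eqref{Ismail-conjecture} and then invoke Theorem \ref{T}. Concretely, I aim to exhibit polynomials $\phi$ of degree at most two and $\psi$ of degree at most one, together with constants $\lambda_n$, such that
\[
\phi(z)\mathcal{D}_q^{2}P_n(z)+\psi(z)\mathcal{S}_q\mathcal{D}_qP_n(z)=\lambda_nP_n(z),\qquad n=0,1,\ldots\,;
\]
once this is in hand, Theorem \ref{T} forces $(P_n)$ to belong to the Askey-Wilson class (including its special/limiting cases).

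The natural route is to work in the dual setting. Let ${\bf u}$ be the regular functional for which $(P_n)$ is an OPS. From \eqref{expression-an}, the vanishing $a_{n,j}=0$ for every $j\le n-3$ implicit in \eqref{Ismail-conjecture} is equivalent to
\[
\langle{\bf u},\pi\,\mathcal{D}_q^{2}P_n\,P_j\rangle=0,\qquad 0\le j\le n-3,\ n\ge 3.
\]
I will transfer the two Askey-Wilson derivatives off $P_n$ onto the functional $\pi\,{\bf u}$ by combining the transpose rule $\langle\mathbf{D}_q{\bf u},f\rangle=-\langle{\bf u},\mathcal{D}_qf\rangle$ with the Leibniz-type identities \eqref{def-Dx-fg}, \eqref{def-Sx-fg} and \eqref{def-fD_x-u}. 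Expanding the resulting functional $\mathbf{D}_q^{2}(\pi\,{\bf u})$ (together with the mixed $\mathbf{S}_q\mathbf{D}_q(\pi\,{\bf u})$ that naturally appears through the product rule) in the dual basis $({\bf a}_n)_{n\ge 0}$ via Proposition \ref{Propo-an-def} and using \eqref{basis-Dx-derivatives}, the infinite family of vanishing conditions should collapse into a single Pearson-type functional equation
\[
\mathbf{D}_q(\phi\,{\bf u})=\mathbf{S}_q(\psi\,{\bf u}),\qquad \deg\phi\le 2,\ \deg\psi\le 1.
\]
Standard arguments (see \cite{KDP2021, FK-NM2011}) then convert this Pearson equation into the target operator equation on the polynomials themselves.

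The main obstacle is the bookkeeping of degrees in the second step: the hypothesis only restricts $\deg\pi\le 4$, whereas the Pearson data must have $\deg\phi\le 2$ and $\deg\psi\le 1$. Reconciling these gaps requires using \eqref{def-Sx^2f} to trade iterated averaging for $\mathcal{D}_q^{2}$ plus lower-order terms, and \eqref{def-fDxg} to shift a derivative off $\pi$, so that the excess two degrees collapse under the eigenvalue-type constraint imposed by \eqref{Ismail-conjecture}. As a fallback, one can split the argument according to the exact degree of $\pi$ (four, three, two, one, zero) and verify the conclusion case by case, invoking Theorem \ref{T} in each subcase to identify the corresponding Askey-Wilson limit.
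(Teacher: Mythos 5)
The paper does not prove this statement at all: Theorem \ref{Kenfack-Kerstin-result} is recalled as a known result, attributed to \cite{MKKJ2019} with a short proof in \cite{KCDM2023}, so there is no internal argument to compare yours against. Judged on its own terms, your proposal correctly identifies the standard strategy (orthogonality conditions, dual basis, a functional equation for ${\bf u}$, then Theorem \ref{T}), but it has a genuine gap exactly at the step that constitutes the theorem. From $\langle{\bf u},\pi\,\mathcal{D}_q^{2}P_n\,P_j\rangle=0$ for $j\le n-3$, what you actually obtain, via \eqref{expression-an} and \eqref{basis-Dx-derivatives}, is that for each fixed $j$ the functional $\pi P_j{\bf u}$ lies in the span of the first $j+1$ elements of the dual basis of $(P_m^{[2]})_{m\ge0}$; applying $\mathbf{D}_q^{2}$ this yields \emph{second-order} functional equations of the type $\mathbf{D}_q^{2}(\pi P_j{\bf u})=\rho_j{\bf u}$ with $\deg\rho_j\le j+2$, not the first-order Pearson equation $\mathbf{D}_q(\phi{\bf u})=\mathbf{S}_q(\psi{\bf u})$. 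The passage from these second-order relations to a single first-order equation with $\deg\phi\le2$ and $\deg\psi\le1$ --- simultaneously dropping the order by one and the degree by two, with coefficients independent of $n$ --- is the entire content of the theorem, and your text only asserts that the conditions ``should collapse'' into it. The non-commutativity of $\mathcal{D}_q$ and $\mathcal{S}_q$ with multiplication (the product rules \eqref{def-Dx-fg}--\eqref{def-fDxg} generate mixed terms at every transfer) is precisely what makes this collapse nontrivial, and the hypothesis $a_{n,n-2}\neq0$ must enter somewhere to rule out degenerate solutions; your outline never uses it.

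The proposed fallback --- splitting on the exact degree of $\pi$ and ``verifying the conclusion case by case'' --- does not repair this, since in each case one still faces the same order-reduction problem; moreover $\pi$ is not given a priori as the Askey--Wilson polynomial $\phi$ of Theorem \ref{T}, so there is nothing to ``verify'' until the Pearson data are actually produced. To turn the outline into a proof you would need to carry out the reduction explicitly (as is done in \cite{MKKJ2019} and, more compactly, in \cite{KCDM2023}), or at minimum prove a lemma of the form: if a regular ${\bf u}$ satisfies $\mathbf{D}_q^{2}(\pi{\bf u})=\rho\,{\bf u}$ and $\mathbf{D}_q^{2}(\pi P_1{\bf u})=\sigma\,{\bf u}$ with the stated degree bounds and the nondegeneracy coming from $a_{n,n-2}\neq0$, then ${\bf u}$ satisfies $\mathbf{D}_q(\phi{\bf u})=\mathbf{S}_q(\psi{\bf u})$ with $\deg\phi\le2$, $\deg\psi\le 1$. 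As written, the proposal restates the difficulty rather than resolving it.
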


\section{Results}\label{main}
We are now in the position to prove our results. 
\begin{theorem} \label{propo-sol-q-quadratic}
If $(P_n)_{n\geq 0}$ is a monic OPS such that 
\begin{align}
(az^2+bz+c)\mathcal{D}_qP_{n}(z)= a_n\mathcal{S}_qP_{n+1}(z)+b_n\mathcal{S}_q P_{n}(z)+c_n\mathcal{S}_qP_{n-1}(z)\quad (n=0,1,\ldots)\;,\label{equation-case-deg-is-two}
\end{align}
with $c_n\neq 0$ for $n=0,1,\ldots$, where the constant parameters $a$, $b$ and $c$ are chosen such that
\small{
\begin{align}
&(4\alpha^2-1)aC_2C_3 
+\frac{r_3}{2}\Big[(B_0+B_1)^2 +4\alpha^2 (C_1-B_0B_1 +\alpha^2 -1) -2(2\alpha^2-1)C_2  \Big]=0\;,\label{condition-case-deg-is-two-1}
\end{align}
}
whenever $a\neq 0$, and
\begin{align}
aC_2C_3\Big(b_2+2aB_2+\frac{b}{\alpha}\Big) 
-r_3\left(a(B_2+B_1)C_2  +\frac{b}{\alpha}C_2  -\frac{r_2}{2}(B_1-B_0) \right)=0\;, \label{condition-case-deg-is-two-2}
\end{align}
$r_i=c_i+2aC_i$, $i=2,3$, then $(P_n)_{n\geq 0}$ are multiple of Askey-Wilson polynomials, or special or limiting cases of them. Moreover $(P_n)_{n\geq 0}$ satisfy \eqref{Ismail} with
\begin{align}
&\phi(z)=\mathfrak{a}z^2 +\mathfrak{b}z+\mathfrak{c}\;,~\psi(z)=z-B_0\;,~h(z)=0\;,~ \lambda_n=\gamma_n (\mathfrak{a}\gamma_{n-1}+\alpha_{n-1})\;,\label{expression-phi-psi-general-case}
\end{align}
where
\begin{align*}
&\mathfrak{a}=-\frac{aC_3 +(\alpha^2-1)r_3}{\alpha r_3}\;;\\
&\mathfrak{b}=-\frac{1}{2\alpha}\left(\Big(1-2a\frac{C_3}{r_3}\Big)\Big(B_0+B_1\Big)-2\alpha^2B_0  \right) \;;\\
&\mathfrak{c}=-\frac{1}{2\alpha}\left(\Big(1-2a\frac{C_3}{r_3}\Big)\Big(C_1-B_0B_1\Big)+C_1 +B_0 ^2\right)   \;;
\end{align*}
being $B_0$, $B_1$, $C_1$, $C_2$ and $C_3$ coefficients for the TTRR relation \eqref{TTRR_relation} satisfied by $(P_n)_{n\geq 0}$.
\end{theorem}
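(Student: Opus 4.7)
My plan is to derive, from the hypothesis \eqref{equation-case-deg-is-two}, a second-order $q$-difference equation of the form \eqref{Ismail-conjecture} with $\pi$ of degree at most four, and then invoke Theorem \ref{Kenfack-Kerstin-result} to conclude that $(P_n)_{n\geq 0}$ is, up to rescaling, an Askey-Wilson family. The two conditions \eqref{condition-case-deg-is-two-1}--\eqref{condition-case-deg-is-two-2} will emerge as precisely the vanishing of the two top-degree coefficients of $\pi$.

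A preliminary (mechanical) step is to match the three leading coefficients on both sides of \eqref{equation-case-deg-is-two} using \eqref{Dx-xnSx-xn} together with the TTRR \eqref{TTRR_relation}; this determines $a_n$, $b_n$, and $c_n$ as explicit rational functions of $a,b,c$ and the data $B_j,C_j$. The core of the argument is then to apply $\mathcal{D}_q$ to \eqref{equation-case-deg-is-two}. The product rule \eqref{def-Dx-fg} on the LHS and the commutation \eqref{def-DxnSxf} with $n=1$ (so that $\alpha_1=\alpha$ and $\gamma_1=1$) on the RHS give
\begin{align*}
(\mathcal{D}_q\phi_0)\,\mathcal{S}_q\mathcal{D}_qP_n &+ (\mathcal{S}_q\phi_0)\,\mathcal{D}_q^2P_n \\
&= \alpha\,\mathcal{S}_q\bigl(a_n\mathcal{D}_qP_{n+1}+b_n\mathcal{D}_qP_n+c_n\mathcal{D}_qP_{n-1}\bigr) \\
&\quad + \texttt{U}_1\bigl(a_n\mathcal{D}_q^2P_{n+1}+b_n\mathcal{D}_q^2P_n+c_n\mathcal{D}_q^2P_{n-1}\bigr),
\end{align*}
where $\phi_0(z)=az^2+bz+c$. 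Independently, applying $\mathcal{S}_q$ to \eqref{equation-case-deg-is-two} and using \eqref{def-Sx-fg} together with \eqref{def-Sx^2f} produces a second relation coupling the same quantities $\mathcal{S}_q\mathcal{D}_qP_n$ and $\mathcal{D}_q^2P_n$ to combinations of $P_k$'s. Eliminating $\mathcal{S}_q\mathcal{D}_qP_n$ between the two, and reducing the shifted-index terms $\mathcal{D}_q^2P_{n\pm 1}$ back to index $n$ via the TTRR applied to $\mathcal{D}_q P_{n+1}$ (through $P_{n+1}=(z-B_n)P_n-C_nP_{n-1}$), leads after simplification to an identity
\begin{align*}
\pi(z)\,\mathcal{D}_q^2P_n(z)=\sum_{j=n-2}^{n+2}a_{n,j}P_j(z),
\end{align*}
with $\pi$ a priori of degree at most six and coefficients depending only on $a,b,c$ and on $B_0,B_1,B_2,C_1,C_2,C_3$. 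Demanding that the two highest coefficients of $\pi$ vanish is equivalent to the stated conditions \eqref{condition-case-deg-is-two-1}--\eqref{condition-case-deg-is-two-2}; once $\deg\pi\leq 4$ is established, Theorem \ref{Kenfack-Kerstin-result} identifies $(P_n)_{n\geq 0}$ as an Askey-Wilson family (or a special or limiting case).

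To extract \eqref{expression-phi-psi-general-case}, by Theorem \ref{T} the polynomials $(P_n)_{n\geq 0}$ must satisfy \eqref{Ismail} for some admissible triple $(\phi,\psi,h)$; monicity of $(P_n)_{n\geq 0}$ forces $\psi$ to be monic of degree one and $h\equiv 0$. Specializing \eqref{Ismail} to $n=0$ and $n=1$ and matching with the TTRR data $B_0,B_1,C_1$ determines the constant term of $\psi$ to be $B_0$ and pins down $\mathfrak{a},\mathfrak{b},\mathfrak{c}$ in the stated form, while $\lambda_n=\gamma_n(\mathfrak{a}\gamma_{n-1}+\alpha_{n-1})$ follows by extracting the coefficient of $z^n$ on both sides of \eqref{Ismail} via \eqref{Dx-xnSx-xn}. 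The main obstacle is the sustained algebraic bookkeeping: verifying that the top two coefficients of the a priori sextic $\pi$ reduce exactly to \eqref{condition-case-deg-is-two-1}--\eqref{condition-case-deg-is-two-2}, and then reading off $\mathfrak{a},\mathfrak{b},\mathfrak{c}$ from the reduced quartic $\pi$, requires careful handling of the factors $\texttt{U}_1$ and $\texttt{U}_2$ and repeated use of the TTRR for successive elimination.
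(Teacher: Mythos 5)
Your route is genuinely different from the paper's. The paper does not go through Theorem \ref{Kenfack-Kerstin-result} at all: it shows that under \eqref{condition-case-deg-is-two-1}--\eqref{condition-case-deg-is-two-2} the regular functional ${\bf u}$ satisfies the Pearson-type equation ${\bf D}_q(\phi {\bf u})={\bf S}_q(\psi {\bf u})$ with $\phi,\psi$ exactly as in \eqref{expression-phi-psi-general-case}, observes that by regularity this is equivalent to $\phi\,\mathcal{D}_q^2P_n+\psi\,\mathcal{S}_q\mathcal{D}_qP_n=a_{n,n}P_n$ with $a_{n,n}=\gamma_n(\mathfrak{a}\gamma_{n-1}+\alpha_{n-1})\neq0$, and concludes by Theorem \ref{T}. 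Your plan --- apply $\mathcal{D}_q$ and $\mathcal{S}_q$ to \eqref{equation-case-deg-is-two}, eliminate $\mathcal{S}_q\mathcal{D}_qP_n$, and land in \eqref{Ismail-conjecture} --- is instead essentially the strategy the paper uses for its \emph{second} theorem (the $\mathcal{D}_q\mathcal{S}_q$ structure relation), transplanted to the first.

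As written, though, your plan has concrete gaps. First, you assert that \eqref{condition-case-deg-is-two-1}--\eqref{condition-case-deg-is-two-2} ``emerge as precisely the vanishing of the two top-degree coefficients of $\pi$''; this is not established and is doubtful: in the paper these conditions are what make the Pearson data close up with $\deg\phi\leq2$ and $\psi(z)=z-B_0$, and \eqref{condition-case-deg-is-two-2} involves $b_2$, a coefficient of the structure relation itself rather than a leading-order quantity, which does not look like a top-degree coefficient of your sextic. You would have to carry out the elimination to see which conditions actually appear. Second, Theorem \ref{Kenfack-Kerstin-result} requires $a_{n,n-2}\neq0$ for every $n$; you never verify this, and without it the theorem does not apply (the paper explicitly checks the analogous nonvanishing $r_n^{[5]}\neq0$ in its second proof). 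Third, your endgame cannot deliver the second half of the statement: the claimed $\mathfrak{a}$, $\mathfrak{b}$, $\mathfrak{c}$ depend on $C_3$ and $r_3=c_3+2aC_3$, so they cannot be ``pinned down'' by matching \eqref{Ismail} against only $B_0,B_1,C_1$ at $n=0,1$. The paper's functional-equation route yields \eqref{expression-phi-psi-general-case} directly; if you insist on going through Theorem \ref{Kenfack-Kerstin-result}, you still need a separate argument producing these explicit $\phi$, $\psi$ and $\lambda_n$.
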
  

\begin{proof}
Let $(P_n)_{n\geq 0}$ be a monic OPS with respect to the functional ${\bf u} \in \mathcal{P}^*$ and satisfying \eqref{equation-case-deg-is-two}. Under conditions \eqref{condition-case-deg-is-two-1}--\eqref{condition-case-deg-is-two-2} we obtain
$${\bf D}_q(\phi {\bf u})={\bf S}_q(\psi {\bf u})\;,$$
where $\phi$ and $\psi$ are given in \eqref{expression-phi-psi-general-case}. Since ${\bf u}$ is regular, this is equivalent to  
$$\phi(z) \mathcal{D}_q ^2 P_n(z) + \psi(z) \mathcal{S}_q\mathcal{D}_qP_n(z) =a_{n,n}P_n(z)~~\quad(n=1,2,\ldots)\;,$$ 
where
$a_{n,n}=\gamma_n(\mathfrak{a}\gamma_{n-1}+\alpha_{n-1})\neq 0$. The desired result follows by Theorem \ref{T}. A completed proof of this result with all details is available in \cite{DMAS2022}.
\end{proof}

For our second structure relation mentioned at the end of the introduction, the following result follows. We emphasize that this result is new and generalizes a recent result available in \cite{DMAS2023}.

\begin{theorem}
Let $(P_n)_{n\geq 0}$ be an OPS such that the following equation holds
\begin{align}\label{second-pb-to-solve}
&(az^2+bz+c)\mathcal{D}_q \mathcal{S}_q P_n(z)=a_n P_{n+1}(z)+b_n P_n(z)+c_n P_{n-1}(z)\;,
\end{align}
where $a,b,c\in \mathbb{C}$, and $(a_n)_{n\geq 0}$, $(b_n)_{n\geq 0}$, $(c_n)_{n\geq 0}$ are sequences of complex numbers such that $c_n\neq 0$ for each $n$. Then up to an affine transformation of the variable, $(P_n)_{n\geq 0}$ is the Askey-Wilson polynomial sequence or special or limiting case of it.
\end{theorem}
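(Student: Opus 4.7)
My plan is to reduce the hypothesis \eqref{second-pb-to-solve} to the characterization provided by Theorem \ref{Kenfack-Kerstin-result}: that is, to derive a five-term relation
\[
\pi(z)\mathcal{D}_q^2 P_n(z) = \sum_{j=n-2}^{n+2} a_{n,j} P_j(z), \qquad a_{n,n-2}\neq 0,
\]
with $\deg \pi \le 4$, from which the conclusion follows up to an affine change of the variable.

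The first step is to invoke identity \eqref{def-DxnSxf} with $n=1$ (noting $\alpha_1=\alpha$, $\gamma_1=1$), giving $\mathcal{D}_q\mathcal{S}_q f = \alpha\mathcal{S}_q\mathcal{D}_q f + \texttt{U}_1\mathcal{D}_q^2 f$. This rewrites \eqref{second-pb-to-solve} as
\[
\alpha\phi(z)\,\mathcal{S}_q\mathcal{D}_qP_n(z) + \phi(z)\texttt{U}_1(z)\,\mathcal{D}_q^2 P_n(z) = a_n P_{n+1}(z) + b_n P_n(z) + c_n P_{n-1}(z), \qquad (\star)
\]
bringing the left-hand side into the two ingredients, $\mathcal{S}_q\mathcal{D}_q$ and $\mathcal{D}_q^2$, that appear in the Askey-Wilson operator equation \eqref{Ismail}. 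I would then apply $\mathcal{D}_q$ to $(\star)$, using the Leibniz rule \eqref{def-Dx-fg} and eliminating the resulting $\mathcal{S}_q^2\mathcal{D}_qP_n$ via \eqref{def-Sx^2f} and $\mathcal{D}_q\mathcal{S}_q\mathcal{D}_qP_n$ via \eqref{def-DxnSxf}; and in parallel apply $\mathcal{S}_q$ to $(\star)$, using \eqref{def-Sx-fg} and treating $\mathcal{D}_q^2\mathcal{S}_qP_n$ and $\mathcal{S}_q\mathcal{D}_q\mathcal{S}_qP_n$ through the same identities. On the right-hand sides, the resulting $\mathcal{D}_qP_{n\pm 1},\mathcal{D}_qP_n$ and $\mathcal{S}_qP_{n\pm 1},\mathcal{S}_qP_n$ are polynomials of degree at most $n+1$ and can be re-expanded in the basis $(P_j)$ via the TTRR \eqref{TTRR_relation}.

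A judicious linear combination of $(\star)$, its $\mathcal{D}_q$-transform, and its $\mathcal{S}_q$-transform (possibly multiplied by $\texttt{U}_1$ or $\texttt{U}_2$ to achieve polynomial divisibility) eliminates the auxiliary quantities $\mathcal{S}_q\mathcal{D}_q P_n$, $\mathcal{S}_q\mathcal{D}_q^2 P_n$, and $\mathcal{D}_q^3 P_n$, leaving the desired equation $\pi(z)\mathcal{D}_q^2 P_n = \sum_{j=n-2}^{n+2}a_{n,j}P_j$ with $\deg\pi\le 4$. Matching leading coefficients on the right expresses $a_{n,n-2}$ as a nonzero scalar (built from $\alpha$, the $\gamma_k$'s, and the TTRR coefficients $C_k$, all nonzero by regularity) times $c_n$, which is nonzero by hypothesis. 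Theorem \ref{Kenfack-Kerstin-result} then identifies $(P_n)$ as a multiple of Askey-Wilson polynomials, or a special or limiting case, completing the proof.

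The main obstacle will be the algebraic bookkeeping needed to identify the precise combination carrying out the simultaneous elimination of $\mathcal{S}_q\mathcal{D}_q^2 P_n$ and $\mathcal{D}_q^3 P_n$: the identities \eqref{def-DxnSxf}, \eqref{def-Sx^2f}, and \eqref{def-Sx-fg} each introduce $\texttt{U}_1$- and $\texttt{U}_2$-factors that couple non-trivially with $\phi$, $\mathcal{S}_q\phi$, and $\mathcal{D}_q\phi$, so verifying that the cancellation does happen and that the final $\pi$ has degree at most $4$ is the delicate step, presumably requiring careful use of the explicit forms $\texttt{U}_1(z)=(\alpha^2-1)z$ and $\texttt{U}_2(z)=(\alpha^2-1)(z^2-1)$.
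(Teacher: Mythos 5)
Your end target is the right one---the paper also concludes by producing a relation of the form \eqref{Ismail-conjecture} and invoking Theorem \ref{Kenfack-Kerstin-result}---but the route you sketch has a genuine gap that I do not think can be repaired as stated. The problem is your treatment of the right-hand side. After you apply $\mathcal{D}_q$ (or $\mathcal{S}_q$) to $(\star)$, the right-hand side becomes $a_n\mathcal{D}_qP_{n+1}+b_n\mathcal{D}_qP_n+c_n\mathcal{D}_qP_{n-1}$ (resp.\ the same with $\mathcal{S}_q$). You assert these ``can be re-expanded in the basis $(P_j)$ via the TTRR,'' but for a generic OPS the Fourier expansion of $\mathcal{D}_qP_{n+1}$ in the basis $(P_j)_{j\geq 0}$ involves \emph{all} indices $j\le n$, not just a window of bounded width around $n$; the hypothesis \eqref{second-pb-to-solve} controls only $(az^2+bz+c)\mathcal{D}_q\mathcal{S}_qP_j$, never $\mathcal{D}_qP_j$ or $\mathcal{S}_qP_j$ alone. (Indeed, a short expansion of $\pi\mathcal{D}_qP_n$ is itself a characterizing property of very special families, so assuming it here is circular.) Consequently the relation you end up with is not of the form $\pi(z)\mathcal{D}_q^2P_n=\sum_{j=n-2}^{n+2}a_{n,j}P_j$, and Theorem \ref{Kenfack-Kerstin-result} does not apply. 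A second, lesser issue is that the ``judicious linear combination'' eliminating $\mathcal{S}_q\mathcal{D}_qP_n$, $\mathcal{S}_q\mathcal{D}_q^2P_n$ and $\mathcal{D}_q^3P_n$ while keeping $\deg\pi\le 4$ is only asserted; you correctly flag this as the delicate step, but nothing in the proposal shows it can be done.

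The paper sidesteps both difficulties by never differentiating the structure relation itself. Instead it applies $\mathcal{S}_q$ and then $\mathcal{D}_q$ to the TTRR \eqref{TTRR_relation}, using \eqref{def-Sx-fg}, \eqref{def-Dx-fg}, \eqref{def-DxnSxf} and the auxiliary identity \eqref{start-eq01} to arrive at \eqref{almost-final01}, an identity in which the only operator terms are $\mathcal{D}_q^2P_n$ and $\mathcal{D}_q\mathcal{S}_qP_{n+1}$, $\mathcal{D}_q\mathcal{S}_qP_n$, $\mathcal{D}_q\mathcal{S}_qP_{n-1}$. Multiplying \eqref{almost-final01} by $az^2+bz+c$ then allows the hypothesis \eqref{second-pb-to-solve} to be substituted for each $\mathcal{D}_q\mathcal{S}_qP_j$ term (with the TTRR absorbing the leftover factors of $z$ and the term $\alpha(az^2+bz+c)P_n$), yielding \eqref{final01-} with $\pi(z)=2\big(\alpha^2\texttt{U}_2-\texttt{U}_1^2\big)(az^2+bz+c)$ of degree at most four and $r_n^{[5]}=c_{n-1}C_n-(4\alpha^2-3)c_nC_{n-1}-\alpha aC_nC_{n-1}$, which one checks is nonzero. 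If you want to salvage your plan, the fix is exactly this reversal: generate the $\mathcal{D}_q\mathcal{S}_q$ terms from the recurrence so that the hypothesis can be fed in, rather than differentiating the hypothesis and hoping its right-hand side stays short.
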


\begin{proof}
Let $(P_n)_{n\geq 0}$ be an OPS solution of \eqref{second-pb-to-solve}. First of all, we claim that 
\begin{align}\label{start-eq01}
\alpha \mathcal{S}_q ^2 P_n =\big(\alpha ^2 \texttt{U}_2 -\texttt{U}_1 ^2 \big)\mathcal{D}_q ^2P_n +\texttt{U}_1\mathcal{D}_q\mathcal{S}_qP_n +\alpha P_n\;,
\end{align} 
for each $n=0,1,\ldots$. Indeed this equation is obtained from \eqref{def-Sx^2f} by using \eqref{def-Sx-fg}, \eqref{def-DxnSxf} for $n=1$ and the fact that $$\mathcal{S}_q\texttt{U}_2=\alpha ^2\texttt{U}_2 +\texttt{U}_1 ^2,~\mathcal{D}_q\texttt{U}_2=2\alpha \texttt{U}_1,~\mathcal{D}_q\texttt{U}_1=\alpha ^2-1 ,~\mathcal{S}_q\texttt{U}_1=\alpha \texttt{U}_1  \;.$$
Now we apply the operator $S_q$ to the TTRR \eqref{TTRR_relation} satisfied by the monic OPS $(P_n)_{n\geq 0}$, solution of \eqref{second-pb-to-solve}. This gives after using \eqref{def-Sx-fg} the following relation
$$\texttt{U}_2 \mathcal{D}_qP_n +\alpha \mathcal{S}_qP_n =\mathcal{S}_qP_{n+1} +B_n\mathcal{S}_qP_n +C_n\mathcal{S}_qP_{n-1}  \;.$$
We also apply the operator $\mathcal{D}_q$ to the above equation using successively \eqref{def-Dx-fg}, \eqref{def-DxnSxf} (for $n=1$) and \eqref{start-eq01} to obtain
\begin{align}\label{almost-final01}
2\big(\alpha ^2\texttt{U}_2 -\texttt{U}_1 ^2 \big)\mathcal{D}_q ^2 P_n &+(4\alpha ^2-3)z\mathcal{D}_q\mathcal{S}_qP_n+\alpha P_n\nonumber\\
&=\mathcal{D}_q\mathcal{S}_qP_{n+1} +B_n\mathcal{D}_q\mathcal{S}_qP_n+C_n\mathcal{D}_q\mathcal{S}_qP_{n-1}\;.
\end{align}
We finally multiply \eqref{almost-final01} by the polynomial $az^2+bz+c$ and using successively the TTRR \eqref{TTRR_relation} and \eqref{second-pb-to-solve} to obtain
\begin{align}\label{final01-}
2\big(\alpha ^2\texttt{U}_2 -\texttt{U}_1 ^2 \big)&(az^2+bz+c)\mathcal{D}_q ^2 P_n\nonumber\\
&=r_n ^{[1]}P_{n+2} +r_n ^{[2]}P_{n+1}+r_n ^{[3]}P_n +r_n ^{[4]}P_{n-1}+r_n ^{[5]}P_{n-2}\;,
\end{align}
for each $n=0,1,\ldots$, where 
\begin{align*}
r_n ^{[1]}&= a_{n+1}-(4\alpha ^2 -3)a_n -\alpha a ,\\
r_n ^{[2]}&= b_{n+1}-(4\alpha ^2 -3)b_n -\alpha b +\big(B_n-(4\alpha ^2 -3)B_{n+1}\big)a_n -\alpha a(B_n+B_{n+1}),\\
r_n ^{[3]}&= c_{n+1}-(4\alpha ^2 -3)c_n -\alpha c -4(\alpha ^2-1)b_nB_n +a_{n-1}C_n -\alpha bB_n-v_n,\\
r_{n} ^{[4]}&=\big(B_n-(4\alpha ^2 -3)B_{n-1} \big)c_n -\big(\alpha b -b_{n-1}+(4\alpha ^2-3)b_n+\alpha a(B_n +B_{n-1})  \Big)C_n ,\\
r_n ^{[5]}&= c_{n-1}C_n-(4\alpha^2 -3)c_nC_{n-1} -\alpha aC_n C_{n-1},
\end{align*}
being $v_n=(4\alpha ^2-3)a_nC_{n+1} +\alpha a(C_{n+1}+B_n ^2+C_{n})$. In addition, it is not hard to see that $r_n ^{[5]}\neq 0 $ for each $n$. This equation is of the type \eqref{Ismail-conjecture} with $\pi(z)=2\big(\alpha ^2\texttt{U}_2 -\texttt{U}_1 ^2 \big)(az^2+bz+c)$ and hence the result follows by Theorem \ref{Kenfack-Kerstin-result}.

\end{proof}
Following the same ideas, one may prove the following one.
\begin{corollary}
The Askey-Wilson polynomials, or special or limiting case of them are the only OPS solutions of the following equation
\begin{align*}
&(az^2+bz+c)\mathcal{S}_q \mathcal{D}_q P_n(z)=a_n P_{n+1}(z)+b_n P_n(z)+c_n P_{n-1}(z)\;,
\end{align*}
where $a,b,c\in \mathbb{C}$ and $c_n\neq 0$ for each $n$.
\end{corollary}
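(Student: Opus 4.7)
The plan is to mimic the proof of the preceding theorem, namely to derive a second order equation of the form
$$\pi(z)\,\mathcal{D}_q^2 P_n(z)=\sum_{j=n-2}^{n+2}\widetilde{a}_{n,j}P_j(z),\qquad \deg\pi\le 4,\quad \widetilde{a}_{n,n-2}\neq 0,$$
so that Theorem \ref{Kenfack-Kerstin-result} applies.

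The first ingredient is the auxiliary identity
$$\mathcal{S}_q^2 f=\alpha\,\texttt{U}_2\,\mathcal{D}_q^2 f+\texttt{U}_1\,\mathcal{S}_q\mathcal{D}_q f+f,$$
which is the $\mathcal{S}_q\mathcal{D}_q$-counterpart of \eqref{start-eq01}; it follows from \eqref{def-Sx^2f} after expanding $\mathcal{S}_q(\texttt{U}_1\mathcal{D}_q f)$ through \eqref{def-Sx-fg} and using $\mathcal{D}_q\texttt{U}_1=\alpha^2-1$ and $\mathcal{S}_q\texttt{U}_1=\alpha\,\texttt{U}_1$. Secondly, applying $\mathcal{D}_q$ to the TTRR \eqref{TTRR_relation} by means of \eqref{def-Dx-fg} (with $\mathcal{D}_q z=1$, $\mathcal{S}_q z=\alpha z$) gives
$$\mathcal{D}_q P_{n+1}+(B_n-\alpha z)\,\mathcal{D}_q P_n+C_n\,\mathcal{D}_q P_{n-1}=\mathcal{S}_q P_n.$$
Applying $\mathcal{S}_q$ to this relation, expanding $\mathcal{S}_q\bigl((B_n-\alpha z)\mathcal{D}_q P_n\bigr)$ via \eqref{def-Sx-fg}, and eliminating $\mathcal{S}_q^2 P_n$ through the auxiliary identity above, one arrives (after using $\texttt{U}_1+\alpha^2 z=(2\alpha^2-1)z$) at the key relation
\begin{align*}
\mathcal{S}_q\mathcal{D}_q P_{n+1}+B_n\,\mathcal{S}_q\mathcal{D}_q P_n+C_n\,\mathcal{S}_q\mathcal{D}_q P_{n-1}
=2\alpha\,\texttt{U}_2\,\mathcal{D}_q^2 P_n+(2\alpha^2-1)z\,\mathcal{S}_q\mathcal{D}_q P_n+P_n.
\end{align*}

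Multiplying this identity by $(az^2+bz+c)$, substituting the right-hand side of the hypothesis for each of $(az^2+bz+c)\mathcal{S}_q\mathcal{D}_q P_m$ with $m\in\{n-1,n,n+1\}$, and expanding the residual terms $z\bigl(a_n P_{n+1}+b_n P_n+c_n P_{n-1}\bigr)$ and $(az^2+bz+c)P_n$ through the TTRR, yields
$$2\alpha\,\texttt{U}_2\,(az^2+bz+c)\,\mathcal{D}_q^2 P_n=\sum_{j=n-2}^{n+2}\widetilde{a}_{n,j}P_j,$$
where $\pi(z)=2\alpha(\alpha^2-1)(z^2-1)(az^2+bz+c)$ has degree at most four, and a direct coefficient comparison gives
$$\widetilde{a}_{n,n-2}=C_n c_{n-1}-(2\alpha^2-1)\,C_{n-1}\,c_n-a\,C_n C_{n-1}.$$

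The main obstacle is to verify that $\widetilde{a}_{n,n-2}\neq 0$ for every $n$: this is the exact analogue of the claim ``$r_n^{[5]}\neq 0$'' in the preceding theorem, where it is left implicit. The argument should proceed by comparing leading-in-$z$ coefficients in \eqref{second-pb-to-solve} and its shifts, thereby expressing $(a_n,b_n,c_n)$ in terms of $(B_n,C_n,a,b,c)$ and ruling out any accidental cancellation of the displayed combination. Once this nonvanishing is secured, Theorem \ref{Kenfack-Kerstin-result} applied to the last displayed equation identifies $(P_n)_{n\ge 0}$, up to an affine transformation of the variable, as the Askey-Wilson polynomial sequence or a special or limiting case of it.
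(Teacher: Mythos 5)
Your proposal carries out precisely the argument the paper intends: the paper's own proof of this corollary is the single line ``Following the same ideas, one may prove the following one,'' referring to the preceding theorem, and your derivation of the five-term relation $2\alpha\,\texttt{U}_2\,(az^2+bz+c)\,\mathcal{D}_q^2P_n=\sum_{j=n-2}^{n+2}\widetilde a_{n,j}P_j$ followed by an appeal to Theorem \ref{Kenfack-Kerstin-result} is the correct instantiation of those ideas, with the auxiliary identity, the $\mathcal{S}_q\mathcal{D}_q$-form of the differentiated recurrence, and the coefficient $\widetilde a_{n,n-2}=C_nc_{n-1}-(2\alpha^2-1)C_{n-1}c_n-aC_nC_{n-1}$ all checking out. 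The one loose end you flag --- the nonvanishing of $\widetilde a_{n,n-2}$ --- is exactly the point the paper itself leaves unjustified in the preceding theorem (``it is not hard to see that $r_n^{[5]}\neq 0$''), so your treatment is no less complete than the source.
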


\section{Special cases}\label{example}
In this section we consider some special cases of results presented in the previous part. For this purpose we use the following result.
\begin{theorem}\label{main-Thm1}\cite{KDP2021}
Let $(P_n)_{n\geq 0}$ be a monic OPS with respect to ${\bf u} \in \mathcal{P}^*$. 
Suppose that ${\bf u}$ satisfies the distributional equation
$${\bf D}_q(\phi {\bf u})={\bf S}_q(\psi {\bf u})\;,$$
where $\phi(z)=az^2+bz+c$ and $\psi(z)=dz+e$, with $d\neq0$.
Then $(P_n)_{n\geq 0}$ satisfies \eqref{TTRR_relation} with
\begin{align}
B_n  = \frac{\gamma_n e_{n-1}}{d_{2n-2}}
-\frac{\gamma_{n+1}e_n}{d_{2n}},\quad
C_{n+1}  =-\frac{\gamma_{n+1}d_{n-1}}{d_{2n-1}d_{2n+1}}\phi^{[n]}\left( -\frac{e_{n}}{d_{2n}}\right),\label{Bn-Cn-Dx}
\end{align}
 where $d_n=a\gamma_n+d\alpha_n$, $e_n=b\gamma_n+e\alpha_n$, and \begin{align*}
\phi^{[n]}(z)=\big(d(\alpha^2-1)\gamma_{2n}+a\alpha_{2n}\big)
\big(z^2-1/2\big)+\big(b\alpha_n+e(\alpha^2-1)\gamma_n\big)z+ c+a/2,
\end{align*}
\end{theorem}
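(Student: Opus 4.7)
The approach is to generate, from the distributional equation ${\bf D}_q(\phi{\bf u})={\bf S}_q(\psi{\bf u})$, analogous equations at every ``derivative level'' $k$, and then combine them with the dual basis relation \eqref{basis-Dx-derivatives} to read off the norms $\kappa_n=\langle{\bf u},P_n^2\rangle$ and the TTRR coefficients of the original functional.

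First, I would show that $(P_m^{[1]})_{m\geq 0}$ is a monic OPS with respect to a functional ${\bf u}^{[1]}$ proportional to $\phi{\bf u}$, and that ${\bf u}^{[1]}$ itself satisfies ${\bf D}_q(\phi^{[1]}{\bf u}^{[1]})={\bf S}_q(\psi^{[1]}{\bf u}^{[1]})$ for a new quadratic $\phi^{[1]}$ and linear $\psi^{[1]}$. The derivation uses \eqref{def-fD_x-u} applied to $\phi{\bf u}$, followed by the $q$-Leibniz rules \eqref{def-Dx-fg}--\eqref{def-DxnSxf} together with the auxiliary identities $\mathcal{S}_q\texttt{U}_2=\alpha^2\texttt{U}_2+\texttt{U}_1^2$ and $\mathcal{D}_q\texttt{U}_2=2\alpha\texttt{U}_1$. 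Iterating this step $k$ times yields the hierarchy ${\bf D}_q(\phi^{[k]}{\bf u}^{[k]})={\bf S}_q(\psi^{[k]}{\bf u}^{[k]})$; tracking the action on monomials via \eqref{Dx-xnSx-xn} identifies $\psi^{[k]}(z)=d_{2k}z+e_k$ and verifies by induction that $\phi^{[k]}$ matches the closed form in the statement.

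Next, for each $k\geq 0$, I would read off the first two TTRR coefficients of $(P_m^{[k]})_{m\geq 0}$. Writing $P_1^{[k]}(z)=z-B_0^{[k]}$, the orthogonality $\langle{\bf u}^{[k]},P_1^{[k]}\rangle=0$ combined with the level-$k$ distributional equation applied to the constant $1$ yields $B_0^{[k]}=-e_k/d_{2k}$; an analogous moment computation gives $C_1^{[k]}$ as $\phi^{[k]}(B_0^{[k]})$ up to an explicit constant built from $d_{2k-1}$ and $d_{2k+1}$. These base values are then transferred back to the original sequence through \eqref{basis-Dx-derivatives} and the fact that ${\bf u}^{[k]}\propto\phi^{[k-1]}\cdots\phi^{[0]}{\bf u}$: iterating the link between $\kappa_n^{[k]}$ and $\kappa_{n+1}^{[k-1]}$ it provides, the ratio $C_{n+1}=\kappa_{n+1}/\kappa_n$ telescopes to $-\gamma_{n+1}d_{n-1}\phi^{[n]}(-e_n/d_{2n})/(d_{2n-1}d_{2n+1})$, while $B_n=\langle{\bf u},zP_n^2\rangle/\kappa_n$ reduces to the difference $\gamma_n e_{n-1}/d_{2n-2}-\gamma_{n+1}e_n/d_{2n}$ via the same telescoping applied to the moments at levels $n-1$ and $n$.

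The main obstacle will be the induction establishing the closed form of $\phi^{[k]}$: the iteration mixes multiplications by $\phi$ with successive applications of $\mathcal{D}_q$ and $\mathcal{S}_q$, producing terms in $\texttt{U}_1$ and $\texttt{U}_2$ that must be carefully recombined through the $q$-trigonometric identities above. The parametrization of $\phi^{[k]}$ through $z^2-1/2$ and the constant $c+a/2$ is precisely what makes this induction close cleanly, and verifying it requires meticulous bookkeeping of how $\alpha_n$ and $\gamma_n$ combine at each step so that the $z^2$-coefficient settles as $d(\alpha^2-1)\gamma_{2k}+a\alpha_{2k}$ and the $z$-coefficient as $b\alpha_k+e(\alpha^2-1)\gamma_k$.
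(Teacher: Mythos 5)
First, a point of reference: the paper itself offers no proof of Theorem \ref{main-Thm1} --- it is imported verbatim from \cite{KDP2021} --- so your attempt can only be measured against that source. Your overall architecture (generate a hierarchy of Pearson-type equations ${\bf D}_q(\phi^{[k]}{\bf u}^{[k]})={\bf S}_q(\psi^{[k]}{\bf u}^{[k]})$ for the derivative sequences, extract $B_0^{[k]}=-e_k/d_{2k}$ and $C_1^{[k]}$ from the lowest moments at each level, then transfer back to $(P_n)_{n\geq 0}$) is indeed the strategy of the cited work, and your moment computation for $B_0^{[k]}$, obtained by pairing the level-$k$ equation with $1$ and with $z$, is correct; combined with the coefficient identities $B_n=f_n-f_{n+1}$ and $f_{n+1}=-\gamma_{n+1}B_0^{[n]}$ coming from \eqref{Dx-xnSx-xn}, it does reproduce $B_n=\gamma_n e_{n-1}/d_{2n-2}-\gamma_{n+1}e_n/d_{2n}$.

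The genuine gap is your opening claim that $(P_m^{[1]})_{m\geq 0}$ is orthogonal with respect to a functional \emph{proportional to} $\phi{\bf u}$. That is the $\mathrm{D}=d/dz$ picture and it fails for $\mathcal{D}_q$: for the four-parameter family one has $\mathcal{D}_qQ_{n+1}(\cdot;a_1,\dots,a_4|q)\propto Q_n(\cdot;a_1q^{1/2},\dots,a_4q^{1/2}|q)$, and the Askey--Wilson weight with parameters $a_jq^{1/2}$ is not a polynomial multiple of the weight with parameters $a_j$ (the ratio is a non-terminating infinite $q$-product); since a regular functional is determined by its OPS up to a scalar, ${\bf u}^{[1]}$ cannot equal $\lambda\phi{\bf u}$. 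The correct ${\bf u}^{[1]}$ is built from the \emph{dual} operators ${\bf D}_q$, ${\bf S}_q$ acting on $\phi{\bf u}$ and $\psi{\bf u}$ --- this is precisely why \eqref{def-fD_x-u} carries the extra ${\bf S}_q(\mathcal{D}_qf\,{\bf u})$ term --- and the error propagates into your computation of $C_{n+1}$: the ``telescoping of $\kappa_{n+1}^{[k-1]}/\kappa_n^{[k]}$'' you invoke presupposes the false identity ${\bf u}^{[k]}\propto\phi^{[k-1]}\cdots\phi^{[0]}{\bf u}$, so that step does not go through as written. The repair is either to use the correct expression for ${\bf u}^{[k]}$, or to bypass the norms altogether: writing $P_n(z)=z^n+f_nz^{n-1}+g_nz^{n-2}+\cdots$, the TTRR gives $C_{n+1}=g_{n+1}-g_{n+2}-B_{n+1}f_{n+1}$, and the coefficients $f_n$, $g_n$ propagate through the derivative levels via \eqref{Dx-xnSx-xn} alone, so that $C_{n+1}$ is recovered from the values $B_0^{[k]}$ and $C_1^{[k]}$ without ever identifying ${\bf u}^{[k]}$ inside $\mathcal{P}^*$.
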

Using this result, we obtain the following.
\begin{corollary}
The only monic OPS, $(P_n)_{n\geq 0}$, satisfying 
\begin{align}\label{case-zero-0}
\mathcal{D}_qP_{n+1}(z)=\alpha_n^{-1}\gamma_{n+1}\mathcal{S}_qP_n(z),
\end{align}
are those of the Askey-Wilson polynomials
$$
P_n(z)=Q_n\left( z;a,-a,iq^{-1/2}/a,-iq^{-1/2}/a\Big|q\right), 
$$
with $a\notin \left\lbrace \pm q^{(n-1)/2}, \pm iq^{-n/2}\,|\,n=0,1,\ldots\right\rbrace$.
\end{corollary}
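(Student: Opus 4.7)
The plan is to read \eqref{case-zero-0} as a specialization of the first-type structure \eqref{equation-case-deg-is-two}, invoke Theorem \ref{propo-sol-q-quadratic} to place $(P_n)_{n\geq 0}$ inside the Askey-Wilson hierarchy, and then use Theorem \ref{main-Thm1} to pin down the specific subfamily and the free parameter $a$.

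First, shifting $n\mapsto n-1$ in \eqref{case-zero-0} gives $\mathcal{D}_q P_n = \alpha_{n-1}^{-1}\gamma_n \mathcal{S}_q P_{n-1}$ for $n\geq 1$, matching \eqref{equation-case-deg-is-two} with $a=b=0$, $c=1$, $a_n=b_n=0$, and $c_n=\alpha_{n-1}^{-1}\gamma_n\ne 0$. Condition \eqref{condition-case-deg-is-two-1} is vacuous, while \eqref{condition-case-deg-is-two-2} collapses to $\tfrac12 r_2 r_3 (B_1-B_0)=0$ and so forces $B_1=B_0$. I verify this by evaluating \eqref{case-zero-0} at $n=1$: the monic TTRR gives $\mathcal{D}_q P_2 = \gamma_2 z-(B_0+B_1)$, whereas the right-hand side equals $\alpha_1^{-1}\gamma_2 \mathcal{S}_q(z-B_0)=\gamma_2 z-2B_0$ (using $\alpha_1=\alpha$, $\gamma_2=2\alpha$), so matching constants yields $B_1=B_0$.

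Theorem \ref{propo-sol-q-quadratic} then produces the distributional equation ${\bf D}_q(\phi{\bf u})={\bf S}_q(\psi{\bf u})$ with $\psi(z)=z-B_0$ and $\phi(z)=-\tfrac{\alpha^2-1}{\alpha}z^2+\tfrac{B_0(\alpha^2-1)}{\alpha}z-\tfrac{C_1}{\alpha}$. To identify the specific subfamily I feed this $(\phi,\psi)$ into Theorem \ref{main-Thm1}; the identity $\alpha\gamma_k=\alpha_k+\gamma_{k-1}$ simplifies the auxiliary quantity $d_k$ to $\alpha_{k-1}/\alpha$, and a direct expansion in the $n=2$ case gives $B_2=B_0(3-2\alpha_2)/(2\alpha_2-1)$. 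Comparing this with the value $B_2=B_0/\alpha_2$ obtained by evaluating \eqref{case-zero-0} at $n=2$ in the same style as before yields $B_0(2\alpha_2^2-\alpha_2-1)=0$; since $0<q<1$ rules out both real roots $\alpha_2\in\{1,-\tfrac12\}$, one concludes $B_0=0$. Then all $e_n$ vanish, so Theorem \ref{main-Thm1} gives $B_n=0$ together with an explicit one-parameter formula for $C_{n+1}$ in terms of $C_1$ and $q$.

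Finally, matching $\phi$ and $\psi$ against the normal forms in Theorem \ref{T} pins down $\sigma_1=\sigma_3=0$ and $\sigma_4=-q^{-1}$, which singles out the parameter choice $\{a,-a,iq^{-1/2}/a,-iq^{-1/2}/a\}$; the remaining scalar $a$ is then fixed by $C_1$ via the Askey-Wilson formula recalled in the introduction, and the exceptional values listed in the statement are exactly those $a$ for which one of the factors $(1-a_ia_jq^n)$ vanishes and hence break regularity. The main obstacle will be the $B_2$ consistency step forcing $B_0=0$: it requires pushing both the direct evaluation from the structure relation and the closed form from Theorem \ref{main-Thm1} one step past what Theorem \ref{propo-sol-q-quadratic} directly provides.
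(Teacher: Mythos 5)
Your proposal is correct and follows the same strategy the paper indicates: show that any monic OPS solution is classical (i.e., satisfies $\mathbf{D}_q(\phi\mathbf{u})=\mathbf{S}_q(\psi\mathbf{u})$) and then apply Theorem \ref{main-Thm1} to identify the family; the paper merely outsources the details to \cite{KCDMJP2021-a}, whereas you supply them via Theorem \ref{propo-sol-q-quadratic}. Your key computations check out: condition \eqref{condition-case-deg-is-two-2} reduces to $B_1=B_0$ (verified directly), $d_k=\alpha_{k-1}/\alpha$, the two expressions $B_2=B_0/\alpha_2$ and $B_2=B_0(3-2\alpha_2)/(2\alpha_2-1)$ force $B_0=0$ since $\alpha_2>1$, and the matching $\sigma_1=\sigma_3=0$, $\sigma_4=-q^{-1}$ yields exactly the parameter set $\{a,-a,iq^{-1/2}/a,-iq^{-1/2}/a\}$ with the stated exceptional values of $a$.
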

\begin{proof}
This is easily proved by showing that all monic OPS solutions of \eqref{case-zero-0} are classical and therefore use the previous theorem to deduce the result. For more details we refer the reader to \cite{KCDMJP2021-a}. 
\end{proof}

\begin{corollary}\cite{DMAS2023}
The Rogers $q^{\pm 2}$-Hermite polynomials are the only OPS solutions of the following equation
\begin{align}\label{case-zero-01-q-Hermite}
\mathcal{D}_q\mathcal{S}_q P_{n}(z)=r_nP_{n-1}(z),\quad n=0,1,\ldots\;.
\end{align}
\end{corollary}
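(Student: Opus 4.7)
The plan is to specialize the preceding theorem to this setting and then pin down the specific family by matching low-order coefficients.

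\emph{Reduction to Askey--Wilson or a limit.} Equation \eqref{case-zero-01-q-Hermite} is precisely the instance of \eqref{second-pb-to-solve} with $a=b=0$, $c=1$ and $a_n=b_n=0$, $c_n=r_n$. Matching leading coefficients via \eqref{Dx-xnSx-xn} forces $r_n=\alpha_n\gamma_n$, which is nonzero for $n\ge 1$. The preceding theorem therefore applies and shows that $(P_n)_{n\ge 0}$ is, up to an affine transformation of the variable, an Askey--Wilson sequence or a special or limiting case of it.

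\emph{Symmetry: $B_n=0$ for all $n$.} Writing $P_n(z)=z^n-T_nz^{n-1}+\cdots$ with $T_n=B_0+\cdots+B_{n-1}$ and matching the coefficient of $z^{n-2}$ on both sides of \eqref{case-zero-01-q-Hermite}, using \eqref{Dx-xnSx-xn}, gives the two-term recursion
\[
\alpha_{n-1}\gamma_{n-1}\,T_n=\alpha_n\gamma_n\,T_{n-1}\qquad(n\ge 2),
\]
so by iteration $T_n=(\alpha_n\gamma_n/(\alpha_1\gamma_1))\,B_0$. Since $\alpha_n\gamma_n$ grows like $q^{-n}$ as $n\to\infty$, whereas the $B_n$ of any Askey--Wilson or limiting family are bounded (the explicit formula for $B_n$ stated in the introduction tends to $0$ when $|q|<1$, and similarly in all known limits), we must have $B_0=0$, and consequently $B_n=0$ for every $n$.

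\emph{Identification.} With $(P_n)$ now symmetric, writing $P_n(z)=z^n-V_nz^{n-2}+\cdots$ with $V_n=C_1+\cdots+C_{n-1}$ and matching the coefficient of $z^{n-3}$ on both sides of \eqref{case-zero-01-q-Hermite} produces a closed linear recursion for the $C_n$. Solving it and comparing with the Rogers $Q$-Hermite TTRR coefficients $B_n=0$, $C_{n+1}=(1-Q^{n+1})/4$ forces $Q=q^{\pm 2}$, identifying $(P_n)$ as the Rogers $q^{\pm 2}$-Hermite polynomials. The main obstacle is this last step: one must solve the $z^{n-3}$-recursion explicitly and check that, among all symmetric Askey--Wilson subfamilies (including continuous $q$-ultraspherical), only the two bases $q^2$ and $q^{-2}$ occur. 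The twofold ambiguity is intrinsic to $\mathcal{D}_q\mathcal{S}_q$ and is already visible in the leading eigenvalue $r_n=\alpha_n\gamma_n=(q^n-q^{-n})/[2(q^{1/2}-q^{-1/2})]$, which is invariant under $q\leftrightarrow q^{-1}$.
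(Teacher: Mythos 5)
Your opening reduction is sound and your leading-coefficient computations agree with the paper: $r_n=\alpha_n\gamma_n=\gamma_{2n}/2$, and matching the $z^{n-2}$ coefficient does give $T_n=(\alpha_n\gamma_n/\alpha_1\gamma_1)B_0$, i.e.\ $B_n=\alpha_{2n+1}B_0/\alpha$. The first genuine gap is the argument you use to conclude $B_0=0$. You appeal to boundedness of the $B_n$ for ``any Askey--Wilson or limiting family,'' but that premise is false for exactly the class at hand: the corollary's own answer contains the branch $C_{n+1}=(1-q^{-2n-2})/4$, which is unbounded, and the $q^{-1}$-type limiting families (Ismail--Masson $q^{-1}$-Hermite, $q^{-1}$-Al-Salam--Chihara, etc.) have recurrence coefficients growing like powers of $q^{-n}$ while still satisfying a second-order equation of the form \eqref{Ismail}. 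So you cannot discard $B_0\neq 0$ on asymptotic grounds without a careful analysis of which growth rates are actually attainable; the paper instead tests the explicit expression $B_n=B_0\alpha_{2n+1}/\alpha$ against a further identity forced by the hypothesis, namely its difference equation \eqref{eqS33}, and that is what yields $B_0=0$.

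The second gap is that the identification step is only announced. Matching the coefficient of $z^{n-3}$ gives a first-order inhomogeneous recursion for $V_n=C_1+\cdots+C_{n-1}$ whose general solution retains a free parameter; in the paper's notation this is the statement that \eqref{eqS22} only forces $t_n=k_1q^n+k_2q^{-n}$, and the mixed solutions with $k_1k_2\neq 0$ (which do not correspond to any single Rogers $q^{\pm2}$-Hermite family) are excluded only by the additional relation \eqref{eqS55}. You flag this as ``the main obstacle'' and leave it undone, so the decisive part of the argument is missing. For comparison, the paper's route for this corollary does not invoke the classification theorem at all: it derives the closed system \eqref{eqS11}--\eqref{eqS55} for $r_n$, $B_n$, $C_n$ from the hypothesis and the TTRR, and solves that system directly; your plan would become a complete (and genuinely different) proof only if the $B_0=0$ step is re-justified algebraically and the $C_n$ recursion, together with the relation eliminating the mixed solutions, is actually worked out.
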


\begin{proof}
Let $(P_n)_{n\geq 0}$ be a monic OPS satisfying \eqref{case-zero-01-q-Hermite}. We claim that following system of difference equations holds
\begin{align}
&r_{n+2} ~-2(2\alpha ^2 -1)r_{n+1} +r_n  =0\;,\label{eqS11}\\
&t_{n+2} -2(2\alpha ^2-1)t_{n+1}+t_n=0,~\quad t_n:=r_n/C_n\;, \label{eqS22}\\
&r_{n+1}B_{n+1}-(4\alpha ^2 -3)(r_n+r_{n+1})B_n +r_nB_{n-1}=0\;,\label{eqS33}\\
&t_{n+3}B_{n+2}-(t_{n+2}+t_{n+1})B_{n+1}+t_{n}B_{n}=0\;,\label{eqS44}\\
t_{n+2}&(C_{n+1}-1/4)-2t_n(C_n-1/4)+t_{n-2}(C_{n-1}-1/4)\nonumber \\
&\quad \quad \quad \quad \quad \quad =t_n\left[B_n ^2 -2(2\alpha ^2 -1)B_nB_{n-1}+B_{n-1} ^2  \right]\;,\label{eqS55}
\end{align}
where $B_n$ and $C_n$ are the coefficients of the TTRR \eqref{TTRR_relation} satisfied by $(P_n)_{n\geq 0}$. Indeed, applying the operator $\mathcal{D}_q ^2$ to the TTRR \eqref{TTRR_relation} taking into account \eqref{def-Dx-fg} and \eqref{def-DxnSxf} gives
\begin{align}\label{equatwithDx^2Pn}
2\mathcal{D}_q\mathcal{S}_qP_n(z) +(z-B_n)\mathcal{D}_q ^2P_nz(z)=\mathcal{D}_q ^2P_{n+1}(z)+ C_n\mathcal{D}_q ^2P_{n-1}(z)\;.
\end{align}
Multiplying this equation by $2\big(\alpha ^2\texttt{U}_2-\texttt{U}_1 ^2\big)$, we successively use \eqref{final01-} with $a=0=b$ and $c=1$ therein together \eqref{case-zero-01-q-Hermite} and the TTRR \eqref{TTRR_relation} to obtain a vanishing linear combination of polynomials $P_{n+1}$, $P_n$, $P_{n-2}$, $P_{n-2}$ and $P_{n-3}$. Therefore the coefficients of the mentioned linear combination must all be zero. This gives the system \eqref{eqS11}--\eqref{eqS55}. Now by identifying the two first coefficients of terms with higher degree in \eqref{case-zero-01-q-Hermite}, on may obtain
$$r_n=\gamma_{2n}/2,\quad B_n=B_0\alpha_{2n+1}/\alpha \;\quad n=0,1,\ldots\;.$$ 
On the other hands solutions of \eqref{eqS22} are given by $$t_n=k_1q^n +k_2q^{-n}\;,\quad k_1,k_2\in \mathbb{C}, \quad |k_1|+|k_2|\neq 0\;.$$ From this, it is not hard to see that the above expression of $B_n$ satisfies \eqref{eqS33} if and only if $B_0$ and consequently $$B_n=0,\quad n=0,1,\ldots\;.$$ Finally from the expression of $C_n$ obtained in \eqref{eqS22}, \eqref{eqS55} is satisfied if and only if $k_1k_2=0$ and therefore we obtain $$C_{n+1}=(1-q^{2n+2})/4\quad \textit{or}\quad C_{n+1}=(1-q^{-2n-2})/4,\quad n=0,1,\ldots \;.$$ The results follows.
\end{proof}

\begin{remark}
The results obtained here were proved for the $q$-quadratic lattice
$x(s)=(q^{-s} +q^{s})/2$ and can be extended to the quadratic lattice $x(s)=\mathfrak{c}_4s^2+\mathfrak{c}_5s+\mathfrak{c}_6$ by taking the appropriate limit as it was discussed in \cite{KDP2021}. 
\end{remark}

\section*{Acknowledgements }
The author D. Mbouna was partially supported by CMUP, member of LASI, which is financed by national funds through FCT - Fundac\~ao para a Ci\^encia e a Tecnologia, I.P., under the projects with reference UIDB/00144/2020 and UIDP/00144/2020. A. Suzuki is supported by the FCT grant 2021.05089.BD and partially supported by the Centre for Mathematics of the University of Coimbra-UIDB/00324/2020, funded by the Portuguese Government through FCT/ MCTES.

{

\end{document}